\newcommand{\N}{\mathbb{N}}
\newcommand{\R}{\mathbb{R}}
\newcommand{\Z}{\mathbb{Z}}
\renewcommand{\l}{\lambda}
\newcommand{\Haus}{\mathcal{H}}     
\newcommand{\B}{\mathcal{B}}
\newcommand{\T}{\mathbb{T}}
\renewcommand{\a}{\alpha}
\renewcommand{\b}{\beta}
\renewcommand{\d}{\delta}
\newcommand{\eps}{\varepsilon}
\renewcommand{\l}{\lambda}
\newcommand{\s}{\sigma}
\newcommand{\proj}{\operatorname{proj}}
\newcommand{\oo}{\operatorname{o}}
\newcommand{\OO}{\operatorname{O}}
\newcommand{\dv}{\operatorname{div}}
\newcommand{\Int}{\operatorname{int}}
\newcommand{\co}{\hbox{\rm co\,}}
\newcommand{\txt}{\qquad\hbox}
\renewcommand{\O}{\Omega}
\renewcommand{\i}{\infty}
\newcommand{\p}{\partial}
\newcommand{\ov}{\overline}
\newcommand{\limsups}{\operatorname{lim\,sup^\#}}
\newcommand{\liminfs}{\operatorname{lim\,inf_\#}}
\newtheorem{theorem}{Theorem}[section]
\newtheorem{proposition}[theorem]{Proposition}
\newtheorem{lemma}[theorem]{Lemma}
\theoremstyle{remark}\newtheorem{remark}[theorem]{Remark}
\theoremstyle{definition}\newtheorem{definition}[theorem]{Definition}
\theoremstyle{definition}
\theoremstyle{remark}
\renewcommand{\l}{\lambda}
\title{ Homogenization of the G--Equation: a metric approach }
\author[Siconolfi]{Antonio Siconolfi}
\address{Department of Mathematics \\
                 Universit\`a degli Studi di Roma ``La Sapienza''\\
         Piazzale Aldo Moro 5  \\ 00185 Roma\\
                   Italy.}
\email{siconolf@mat.uniroma1.it}
\thanks{   {\bf Key words.} homogenization,  Hamilton--Jacobi equations, effective Hamiltonian.
\\{\bf AMS subject classifications.} 35B40, 35F21, 37J99, 49L25\\}
\begin{document}
\maketitle

\begin{abstract}
The aim of the paper is to  recover some results of  Cardaliaguet--Nolen--Souganidis in \cite{CNS} and  Xin-- Yu in \cite{XY}  about the homogenization of the G--equation, using different and simpler techniques. The main mathematical issue is the lack of coercivity of the Hamiltonians. In our approach we consider a multivalued dynamics  without periodic invariants sets, a family of intrinsic distances and perform an approximation by a sequence of coercive Hamiltonians.
\end{abstract}

\section{Introduction}

True to the title, the aim of these notes is to study the homogenization of time dependent Hamilton--Jacobi equations with Hamiltonian
\[H(x,p)= |p| + p \cdot V(x) \qquad (x,p) \in \R^N \times \R^N,\]
where the advection field $V$ is $\Z^N$--periodic, Lipschitz continuous  and with suitably small divergence. This Hamiltonian appears in combustion theory, where  the homogenization result can be interestingly interpreted, see \cite{CNS}, \cite{XY}. However, we do not treat this side of the issue, and stay solely concerned with the theoretical aspects of the matter.

As it is well known, the relevant mathematical point is that $H$ looses coercivity whenever $|V| \geq 1$ and consequently, as pointed out
in \cite{CNS}, \cite{XY},  there could be no homogenization if $V$ is just assumed to be Lipschitz continuous or even more regular, some additional requirements are needed, specifically the condition on the divergence. This is in contrast to the homogenization theory for coercive Hamiltonians, see \cite{LPV}.

It is opportune to make clear that our outputs are not new, in \cite{CNS} it has been  already established, among other things,  that homogenization takes place under assumptions just slightly different  from ours, and for vector fields $V$ also depending on time, which adds further  complications.

The interest of our contribution, if any, is that the  proofs are different and, we believe, more simple  and intuitive. They rely on a metric approach  to the matter.   Note that metric techniques have been already used in homogenization of Hamilton--Jacobi equations, in particular in the stationary ergodic case, see  for instance \cite{AS}, \cite{DS}.

 To begin with,  we   define the effective Hamiltonian $\ov H$, appearing in the limit equation, as
\[ \ov H(P)= \inf\{a \mid  H(x,Du+P) =a \;\hbox{ admits an
usc bdd periodic subsolution}\},\]
without passing through ergodic  approximation. From this formulation, we see  that the subsolutions of  $H(x,Du+P) = \ov H(P) + \d$, for $\d >0$ small,  can be used as approximate correctors to prove, via  the  perturbed test function method, see \cite{E},     that the weak lower semilimit of the solutions to the $\eps$--oscillatory equations is   supersolution of the limit equation with the effective Hamiltonian. This depends on the very definition of $\ov H$, and  can be done without using the assumption on the divergence of $V$.

The  solution of the problem therefore boils down  to show the  existence of lsc bounded periodic supersolutions to the critical equation  $H(x,Du+P) = \ov H(P)$. This  should actually allow performing the other half of the homogenization procedure.

Backtracking  the issue, we see that such  supersolutions do exist provided that some periodic distances related to $H$ are bounded. In other terms we discover, not surprisingly,  that  some controllability condition is needed in order for homogenization to take place, compare for instance with \cite{AB}, where other noncoercive  models are considered.

The boundedness  of the distances is in turn equivalent, see Sections \ref{distone}--6, to the nonexistence of periodic invariant sets for a multivalued  dynamics related to $H$. This is the key  point where  the condition on the divergence of $V$ enters into play.

The key point in this respect is  Theorem \ref{invariant}, where it is proved that if such a  set, say $\Omega$, does exist, then it  possesses Lipschitz boundary, but then the compression of $V$ toward the interior of $\Omega$ is not  compatible with its small divergence. This result is deduced through divergence theorem and isoperimetric inequality, see Lemma \ref{perimeter}, Theorem \ref{divergence}. More precisely, we apply the isoperimetric inequality on the flat torus $\T^N = \R^N/\Z^N$, so that our condition on $V$ reads
\[ \|\dv V\|_{L^N(\T^N)} \leq \frac 1\chi,\]
where $\chi$ is the isoperimetric constant on $\T^N$, which coincides with that of $Q_{\frac 12}= (0,1/2)^N$, see Section \ref{structure}. This is actually  the same assumption of \cite{CNS} except that they use the isoperimetric constant of $Q_1=(0,1)^N$. We are not aware of comparison results between these two constants, and we do not know if this difference is just a technical detail, or some more deep fact is involved.

The paper is organized as follows: in Section 2 we give notations and preliminary results. In Section 3 we write down the problem with the standing assumptions. Section 4 is devoted to the definition of a relevant multivalued dynamics and the proof of  the nonexistence of periodic invariant sets. In section 5 we define some distances intrinsically related to $H$ and study their properties.   In Section 5 we approximate $H$ by a sequence $H_k$ of coercive Hamiltonians and prove that the effective Hamiltonians $\ov H_k$ converge to $\ov H$. From this result we deduce the existence of lsc periodic bounded supersolutions to the critical equation related to $H$, and complete the proof of the homogenization result. Finally, in the Appendix we present some proofs.

\bigskip

\section{Preliminaries}\label{structure}

Given a subset $E$ of $\R^N$, we write  $\ov E$, $\Int E$, $\partial E$, $E^c$ to indicate its closure, interior, boundary, complement, respectively. For $x \in \R^N$, $r >0$, we denote by $\B(x,r)$ the open ball centered at $x$ with radius $r$.  Given $a > 0$, we denote by $Q_a$ the open hypercube $(0,a)^N$. We write $\T^N= \R^N/\Z^N$ to indicate the flat torus. Namely the torus with the metric induced by the Euclidean one on $\R^N$. With  this choice $\R^N$ and $\T^N$ are locally isometric.

We  identify $\R^N$ and its dual, and use the symbol  $\cdot$ to denote both  the duality pairing and the scalar product.

Given a convex subset $C$ of $\R^N$, the negative polar cone $C^-$ is defined through
\[C^-= \{p \in \R^N \mid p \cdot q \leq 0 \;\hbox{for any  $ q \in C$}\}.\]

All the  curves we will consider  are assumed to be  Lipschitz continuous.

 The acronyms  usc /lsc stand for upper/lower semicontinuous.
For any bounded function $u: \R^N \to \R$, and $\d >0$  we define $\d$--inf and sup convolutions, respectively, as follows:
\begin{eqnarray*}
  u_\d (x)&=& \inf \left \{u(y) + \frac 1{2\d} \, |x-y|^2 \mid y \in \R^N \right \} \\
  u^\d(x) &=& \sup \left \{u(y) - \frac 1{2\d} \, |x-y|^2 \mid y \in \R^N \right \}
\end{eqnarray*}
Given $x_0 \in \R^N$, we say that $y_0$ is $u_\d$--optimal with respect to $x_0$  if $y_0$ realizes the infimum in the formula of inf convolution. The notion of $u^\d$-- optimal point is given similarly.

Given a sequence of locally bounded functions $u_n$, we define the lower/upper weak semilimit as follows
\begin{eqnarray*}
  \liminfs u_n(x) &=& \inf \{ \liminf\{u_n(x_n) \mid x_n \to x\} \\
  \limsups u_n(x) &=& \sup \{ \limsup\{u_n(x_n) \mid x_n \to x\}
\end{eqnarray*}

The term super/sub solution for a given   PDE  must be understood in the viscosity sense. We denote by $D^+u(x)$ (resp. $D^- u(x)$) the viscosity superdifferential (resp. subdifferential).
\smallskip

\begin{definition}\label{lipp} We say that a subset $\O \subset \R^N$ has
{\em Lipschitz boundary} if for any $x \in \p \O$  there is  a
neighborhood $U$, an hyperplane  of the form $x + p^\perp$,
for some nonvanishing vector $p$,  and a Lipschitz function $\psi$
defined in $(x + p^\perp) \cap U$ with
\begin{itemize}
    \item[{\bf (i)}] $  \p\O \cap U=\{ y + \psi(y) \, p \mid y \in (x + p^\perp) \cap U\}$
    \item[{\bf (ii)}]  $ \Int\O \cap U = \{ y + \l \, p \mid y \in (x + p^\perp) \cap U, \, \l
    > \psi(y)\}$
\end{itemize}
\end{definition}

\smallskip

The isoperimetric constant $\chi$  on the torus $\T^N$ is the smallest  constant  such that
\[|\Theta|^{1-1/N} \leq \chi\, \Haus^{N-1}(\partial \Theta)\]
for any subset $\Theta$ of  $\T^N$ with Lipschitz boundary. Here $\Haus^{N-1}$ stands for the $(N-1)$--dimensional Hausdorff measure and $|\cdot|$ for the $N$--dimensional Lebesgue measure.

 The (relative) isoperimetric  constant $\chi_a$ on $Q_a$ is the smallest constant such that
 \[|\Theta|^{1-1/N} \leq \chi_a\, \Haus^{N-1}(\partial \Theta \cap Q_a)\]
 for all subset $\Theta \subset Q_a$ with Lipschitz boundary.

 According to \cite{R},  the isoperimetric problems in $\T^N$   in $Q_{\frac 12}$ are equivalent, and consequently the constants $\chi$ and $\chi_{\frac 12}$ coincide.

\smallskip

A formulation of the  divergence  Theorem on the torus is:

Let $\Theta \subset \T^N$ be  a set with Lipschitz boundary and $V: \T^N \to \R^N$ a Lipschitz continuous vector field, then
\[ \int_\Theta \dv V \, dx= \int_{\partial \Theta} V \cdot n_\Theta \,
d\,\Haus^{N-1},\] where $n_\Theta$ is the outward unit normal on $\partial\Theta$.

\smallskip

\begin{remark}\label{normale} We recall that if $\Theta$   has Lipschitz boundary then the outer normal $n_\Theta(x)$  exists for $\Haus^{N-1}$ a.e.  $ x \in \partial \Theta$ and satisfies the following property:
\[\proj_\Theta(x+ t \,n_\Theta(x))= x \txt{for $t >0$ suitably small,}\]
where $\proj_\Theta$ indicates the projection on $\ov\Theta$.
\end{remark}

\bigskip
\section{The problem}

We consider the Hamiltonian
\[H(x,p)= |p| + p \cdot V(x)   \txt{in $\R^N \times \R^N$,}\]
where $V: \R^N \to \R^N$, is a vector field  that we assume
\begin{itemize}
  \item[{\bf (A1)}] $\Z^N$--periodic and Lipschitz continuous;
  \item[{\bf (A2)}] satisfying $\|\dv V\|_{L^N(\T^N)} \leq \frac 1 \chi$, where $\chi$ is the isoperimetric constant in $Q_{\frac 12}$ and in $\T^N$ as well, see Section \ref{structure}.
\end{itemize}

\medskip

Throughout the paper  we will denote by $L_V$, $M_V$, the Lipschitz constant of $V$  and the maximum of $|V(x)|$, respectively.  We consider, for $\eps > 0$,  the family of time--dependent Hamilton--Jacobi equations

\begin{equation}\label{HJe} \tag{HJ$_\eps$}
\left\{\begin{array}{cc}
         (u_\eps)_t(x/\eps,t) +H(x/\eps,Du_\eps) & =0 \qquad\hbox{in $\R^N \times (0,+\infty)$}  \\
         u_\eps(\cdot,0) & = u_0  \qquad\hbox{in $\R^N$}\\
       \end{array} \right .
\end{equation}
where

\begin{itemize}
  \item[{\bf (A3)}] $u_0$ is bounded uniformly continuous.
\end{itemize}

Our goal is to study the asymptotic behavior of the (unique) solutions $u^\eps$ of \eqref{HJe}, as $\eps$ goes to $0$, and to prove that they locally uniformly converges to a function $u^0$ which is (unique) solution of a problem of the form

\begin{equation}\label{HJ} \tag{$ \overline {\mathrm{HJ}}$}
\left\{\begin{array}{cc}
         u_t(x,t) +\overline H(Du) & =0 \qquad\hbox{in $\R^N \times (0,+\infty)$}  \\
         u(\cdot,0) & = u_0  \qquad\hbox{in $\R^N$}\\
       \end{array} \right .
\end{equation}
where $\ov H$ is a suitable limit Hamiltonian  convex and positively homogeneous.

Conditions  ${\bf (A_1)}$, ${\bf (A_2)}$, ${\bf (A_3)}$ will be assumed throughout the paper,  without any further mentioning.

\bigskip

\section{Invariant sets}

We introduce a set--valued vector field related to $V$. We set for any $x \in \R^N$,
\[ F(x) = F_1(x)=\ov\co\{ \B(V(x),1) \cup \{0\}\},\]
where $\ov\co$ stands for closed convex hull. We also define for $\d \in (0,1)$
\[F_\d(x) = \ov\co\{ \B(V(x),\d) \cup \{0\}\}.\]

\medskip

The set--valued vector fields $F_\d(x)$ are
Hausdorff--continuous with compact convex values for any $\d \in
(0,1]$. We will say that a  curve $\xi:[0,T] \to \R^N$
is an {\em integral trajectory} of $F$ if
\[ \dot\xi(t) \in F(\xi(t)) \qquad\hbox{for a.e. $t \in [0,T]$.}\]

\smallskip

\begin{definition}\label{definva} We say that a set $\O \subset \R^N$ is
{\em invariant} for $F$ if $\O$ is a proper subset of $\R^N$ and for
any integral curve $\xi$ of $F$ defined in some interval $[0,t]$
with $\xi(0) \in \O$ (resp. $\xi(t) \in \O$), we have
\[ \xi(s) \in \O \txt{for any $s \in [0,t]$.}\]
\end{definition}

\smallskip

Our first result is:

\begin{theorem}\label{invariant}  Any   invariant
set for  the set--valued  dynamic $\dot\xi \in F(\xi)$ in $\R^N$ has
Lipschitz boundary.\end{theorem}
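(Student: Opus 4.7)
The plan is to extract, at every boundary point, a uniform two-sided cone condition from the two invariance clauses, and then use it to represent $\partial\O$ locally as a Lipschitz graph. Throughout I would replace $\O$ by its closure, which remains invariant because solution sets of the inclusion $\dot\xi\in F(\xi)$ depend upper semicontinuously on initial data; hence one may assume $\O$ is closed and $\O^c$ is open and strongly forward invariant (the latter following from the backward clause of the definition applied contrapositively).

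First I would show that at every $x\in\p\O$ we have $|V(x)|\ge 1$. Indeed, if $|V(x)|<1$, then $0\in B(V(x),1)$ and so $F(x)$ contains an open ball $B(0,r)$ with $r>0$. By continuity of $V$, the inclusion $B(0,r/2)\subset F(y)$ persists for $y$ in a fixed neighborhood of $x$. Consequently, for every vector $v$ with $|v|$ small enough, the straight-line curve $\xi(t)=x+tv$ is an integral trajectory of both $F$ and $-F$ on a uniform time interval. Forward and backward invariance applied to the family of such trajectories (parametrized by $v$) then forces an entire ball around $x$ to be contained either in $\O$ or in $\O^c$, contradicting $x\in\p\O$.

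Second, at each $x\in\p\O$, I would introduce the cone $C_x$ generated at the apex $0$ by the set $F(x)$: an elementary computation shows that, since $|V(x)|\ge 1$, this is the closed convex cone with axis $V(x)/|V(x)|$ and half-aperture $\theta_x=\arcsin(1/|V(x)|)$. Because $V$ is $\Z^N$-periodic and continuous, $|V|$ is uniformly bounded by $M_V$ on $\R^N$, so $\theta_x\ge \arcsin(1/M_V)>0$; in particular one has a uniform positive lower bound on the opening of $C_x$ that is independent of the boundary point. The two invariance clauses, giving $F(x)\subset T_\O(x)$ and $-F(x)\subset T_\O(x)$ at $x\in\p\O$ in the appropriate contingent sense, produce an \emph{interior double cone condition}: truncated cones $x+\varepsilon C_x$ and $x-\varepsilon C_x$ are contained in $\O$ for some $\varepsilon>0$, with $\varepsilon$ depending only on $M_V$ and $L_V$.

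Third and most delicate, I would extract a matching \emph{exterior} cone at $x$, namely a truncated cone in $\O^c$ in a direction transverse to $V(x)$. The idea is to approximate $x$ by $y_n\in\O^c$ and exploit that $\O^c$ is strongly forward invariant: the reachable set from $y_n$ inside $\O^c$ contains $y_n+\varepsilon C_{y_n}$ by the same cone computation, and a limiting argument combined with the uniform aperture lower bound yields a truncated cone at $x$ contained in $\overline{\O^c}$. What must be proved is that this limit cone, together with its opposite, is separated from the interior cones of $\O$ at $x$, so that a Lipschitz graph representation exists with graph direction $p\perp V(x)$ and base hyperplane $x+p^\perp$. Once this is achieved, the Lipschitz constant of the representing function is controlled by $\cot\theta_x=\sqrt{|V(x)|^2-1}$, hence uniformly by $\sqrt{M_V^2-1}$, yielding item (i)--(ii) in Definition \ref{lipp}.

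The hardest step is the third: while the first two are essentially a direct read-off of the shape of $F(x)$, the exterior cone condition requires handling the fact that, at boundary points where $|V(x)|$ is close to $1$, the cone $C_x$ degenerates to a half-space and the choice of graph direction becomes critical. I would expect the proof to either rule out $|V(x)|=1$ on $\p\O$ by a quantitative refinement of the argument in the first step, or to carry out the cone separation more carefully using the Lipschitz modulus of $V$ so that the Lipschitz constant of the graph depends only on $M_V$, $L_V$, and the cone aperture.
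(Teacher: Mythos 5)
Your first step (showing $|V(x)|\ge 1$ on $\p\O$) is correct and matches Lemma \ref{aggiunto} of the paper in spirit. Your second step, however, contains the critical error that derails the rest. You assert that both truncated cones $x+\eps C_x$ \emph{and} $x-\eps C_x$ lie inside $\O$ at a boundary point $x$. This is not what the invariance gives, and if it were true it would say nothing about the boundary being a graph. The actual geometry, established by the paper in Lemma \ref{lem2inva}, is a \emph{two-sided} condition with the two cones on \emph{opposite} sides of $\p\O$: for $\d\in(0,1)$ and $U=\B(x,r_V(\d,1))$ one has
\[
(x+\Int F_\d(x))\cap U\subset\Int\O \qquad\text{and}\qquad (x-\Int F_\d(x))\cap U\subset\Int\O^c
\]
(or the reverse if $\O$ is backward rather than forward invariant). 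The ``plus'' cone comes from pushing forward from nearby interior points of $\O$; the ``minus'' cone comes from the contrapositive of invariance applied to nearby points of $\O^c$, not from any extra clause. These two cones are automatically separated by the hyperplane $x+V(x)^\perp$ because $|V(x)|\ge 1$ makes $V(x)^\perp$ a supporting hyperplane of both $F_\d(x)$ and $-F_\d(x)$ at $0$. That separation is precisely why the boundary is locally a Lipschitz graph over $x+V(x)^\perp$ in the graph direction $V(x)$ -- no further ``exterior cone in a transverse direction'' needs to be constructed, and indeed such a cone cannot coexist with the correct picture.

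Consequently your third step is solving a problem that does not arise: once you have the interior cone on one side and the exterior cone on the other, \textbf{Claim 1} (every line $y+\l V(x)$, $y$ near $x$ in $x+V(x)^\perp$, meets $\p\O\cap U$ exactly once) and \textbf{Claim 2} (the resulting function $\l(y)$ is Lipschitz) follow by straightforward sandwiching arguments, exactly as in the paper. Your worry about degeneracy at $|V(x)|=1$ is also resolved by the same device you did not use: working with $F_\d$ for a fixed $\d<1$ rather than $F=F_1$ keeps the aperture of the cone at $0$ bounded below by $\arcsin(\d/M_V)>0$, so the Lipschitz constant stays uniformly controlled regardless of how close $|V|$ gets to $1$. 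As written, your proposal would need its second step corrected (one cone in $\O$, the other in $\O^c$) and its third step discarded; after those changes it would essentially reduce to the paper's own argument.
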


\medskip

\begin{remark} The same statement holds clearly true for the dynamics given by $\dot \xi \in \B(V(\xi),1)$, since the integral curves of the two problems are the same, up to change of parameter. The choice of $F$ fits our analysis better.

\end{remark}

\medskip

\begin{remark}\label{icecream} The above statement is nontrivial only if $H$ is noncoercive, or in other terms if $|V(x)| \geq 1$ for some $x$.  If on the contrary $|V(x)| < 1$ for any $x$ then $0 \in \Int F(x)$ and any curve of $\R^N$ is an integral trajectory of $F$, up to change of parameter, so that invariant subsets cannot  exist. Actually, we reach in our setting the conclusion  that no periodic invariant set exist, but some mathematical effort is needed.
\end{remark}

\medskip

We need some preliminary material. The next assertion is easy to check.

\begin{lemma}\label{corkey} Given  $\d_2 > \d_1$  in $(0,1]$
\[  F_{\d_1}(x) \subset  F_{\d_2}(y) \txt{for any $x \in \R^N$, $y \in \B(x,r_V(\d_1,\d_2))$,}\]
with $r_V(\d_1,\d_2)= \frac {\d_2-\d_1}{L_V}$. CORRECTION
\end{lemma}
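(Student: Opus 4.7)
The plan is a direct verification, since the claim reduces almost immediately to the triangle inequality combined with the Lipschitz continuity of $V$. First, I would observe that, by the definition of $r_V(\d_1,\d_2)$ and assumption \textbf{(A1)}, for any $y \in \B(x, r_V(\d_1,\d_2))$ we have
\[
  |V(y) - V(x)| \leq L_V \, |y-x| \leq L_V \cdot \frac{\d_2 - \d_1}{L_V} = \d_2 - \d_1.
\]

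Next, I would establish the ball inclusion $\B(V(x),\d_1) \subset \B(V(y),\d_2)$: if $w \in \B(V(x),\d_1)$, then by the triangle inequality
\[
  |w - V(y)| \leq |w - V(x)| + |V(x) - V(y)| < \d_1 + (\d_2 - \d_1) = \d_2,
\]
so $w \in \B(V(y),\d_2)$. Adjoining the origin to both sides preserves the inclusion, yielding $\B(V(x),\d_1) \cup \{0\} \subset \B(V(y),\d_2) \cup \{0\}$.

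Finally, since the closed convex hull is a monotone operation with respect to set inclusion, taking $\ov\co$ on both sides gives the desired conclusion
\[
  F_{\d_1}(x) = \ov\co\bigl(\B(V(x),\d_1) \cup \{0\}\bigr) \subset \ov\co\bigl(\B(V(y),\d_2) \cup \{0\}\bigr) = F_{\d_2}(y).
\]
There is no serious obstacle here; the only point deserving attention is verifying that the constant $r_V(\d_1,\d_2) = (\d_2 - \d_1)/L_V$ is exactly what the triangle inequality requires, which the computation above confirms.
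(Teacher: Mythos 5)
Your argument is correct and is exactly the elementary verification the paper has in mind when it states the lemma as "easy to check" without proof: Lipschitz continuity gives $|V(y)-V(x)|\le \d_2-\d_1$ on the ball of radius $r_V(\d_1,\d_2)$, hence $\B(V(x),\d_1)\subset\B(V(y),\d_2)$, and adjoining $0$ and taking closed convex hulls preserves the inclusion. Nothing further is needed.
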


\medskip

\begin{lemma}\label{aggiunto} Let $\O$ be an   invariant set for $F$, then
\[|V(x)| \geq 1 \qquad\hbox{for any $x_0 \in \partial \Omega$.}\]
\end{lemma}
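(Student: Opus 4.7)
The plan is to argue by contradiction, showing that whenever $|V(x_0)| < 1$ the set--valued field $F$ locally allows integral trajectories in every direction, which is incompatible with $x_0$ being a boundary point of an invariant set.

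First, suppose for contradiction that $x_0 \in \partial\Omega$ with $|V(x_0)| < 1$. Setting $\delta = (1 - |V(x_0)|)/2 > 0$, one immediately has $\B(0,\delta) \subset \B(V(x_0),1) \subset F(x_0)$. By continuity of $V$, I would pick a convex neighborhood $U$ of $x_0$ on which $|V(y)| \leq 1 - \delta$, so that
\[\B(0,\delta) \;\subset\; \B(V(y),1) \;\subset\; F(y) \qquad \text{for all } y \in U.\]
(This is essentially the content of Remark~\ref{icecream}, localized near $x_0$; if one prefers, the shrinking follows from Lemma \ref{corkey} applied with appropriate parameters.)

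Second, I would exploit the fact that every Lipschitz curve $\xi:[0,T] \to U$ with $|\dot\xi(t)| \leq \delta$ a.e.\ is then automatically an integral trajectory of $F$. In particular, given any two points $y_1, y_2 \in U$, the straight segment from $y_1$ to $y_2$, reparametrized with constant speed $\delta$, is an integral trajectory of $F$ contained in $U$.

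Third, since $x_0 \in \p\Omega$, by further shrinking $U$ one may select $y_1 \in \Omega \cap U$ and $y_2 \in \Omega^c \cap U$. The integral trajectory constructed above starts at $y_1 \in \Omega$ but ends at $y_2 \notin \Omega$, contradicting the forward invariance required by Definition~\ref{definva}. I do not expect any real obstacle in this argument: the noncoercive mechanism singled out in Remark~\ref{icecream} does all the work, and the only minor point of care is that $U$ be convex (or at least star-shaped around $x_0$) so that the connecting segment stays inside $U$.
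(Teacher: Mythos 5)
Your argument is correct and is essentially identical to the paper's own proof: both suppose $|V(x_0)|<1$, use continuity of $V$ to find a neighborhood on which $0 \in \Int F(\cdot)$, observe that any suitably parametrized curve there (in particular a segment crossing $\partial\Omega$) is an integral trajectory of $F$, and derive a contradiction with invariance. The only difference is that you make the quantitative choice of $\delta$ explicit, which the paper leaves implicit.
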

\begin{proof} Assume for purposes of contradiction that there is $x_0 \in \partial \O$ with $|V(x_0)| < 1$, then we find  $r >0$ such that the same strict inequality holds true in $\B(x_0,r)$.  This implies that
\[ 0 \in \Int F(x)\qquad\hbox{for any $x \in \B(x_0,r)$.}\]
Any curve lying in $\B(x_0,r)$ is therefore an integral curve of $F$, up to change of parameter. This is true in particular for a segment connecting a point in $\B(x_0,r) \cap \O$ to another point in $\B(x_0,r) \setminus \O$, which is in contrast with the invariance of $\O$.
\end{proof}

\medskip

\begin{lemma}\label{lem2inva} Let $\O$, $x$, $\d$  be an  invariant set for $F$, a point
 in $\R^N$ and a constant in $(0,1)$,
then
\begin{eqnarray*}
 (y + \Int F_\d(x)) \cap \B  (x, r_V(\d,1)) &\subset&  \Int\Omega \;\txt{ for any $y \in \ov\Omega
\cap U$}\\
(y - \Int F_\d(x)) \cap \B  (x, r_V(\d,1)) &\subset&   \Int\Omega^c \txt{ for any $y
\in \ov{\Omega^c} \cap U$.}
\end{eqnarray*}
\end{lemma}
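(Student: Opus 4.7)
The plan is to construct explicit integral curves of $F$ — specifically, straight-line segments — and exploit the invariance hypothesis. The crucial mechanism is Lemma \ref{corkey}, which guarantees $F_\d(x) \subset F(\xi)$ as long as $\xi$ remains in the ball $\B(x, r_V(\d,1))$. Since this ball is convex and open, once both endpoints of a segment lie in it, so does the whole segment.

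For the first inclusion, fix $y \in \ov\O \cap U$ and $z \in (y + \Int F_\d(x)) \cap \B(x, r_V(\d,1))$ (reading $U$ as $\B(x, r_V(\d,1))$). First I would approximate $y$ by a sequence $y_n \in \O$ converging to $y$, which is available because $y \in \ov\O$. Consider the linear curves $\xi_n(t) = y_n + t(z - y_n)$ on $[0,1]$. For $n$ large, $y_n$ lies in $\B(x, r_V(\d,1))$ and $z - y_n$ lies in $\Int F_\d(x)$ (by openness of the latter and continuity). By Lemma \ref{corkey}, $\dot\xi_n = z - y_n \in F_\d(x) \subset F(\xi_n(t))$ along the whole segment, so $\xi_n$ is an integral trajectory of $F$. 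Since $\xi_n(0) = y_n \in \O$, invariance forces $z = \xi_n(1) \in \O$. To promote this to $z \in \Int \O$, note that both $y + \Int F_\d(x)$ and $\B(x, r_V(\d,1))$ are open, so the very same argument applies verbatim to every $z'$ in a sufficiently small ball $\B(z,\rho)$, yielding $\B(z,\rho) \subset \O$.

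For the second inclusion, the direction of the segment is reversed. Given $y \in \ov{\O^c} \cap U$ and $z \in (y - \Int F_\d(x)) \cap \B(x, r_V(\d,1))$, one has $y - z \in \Int F_\d(x)$. Approximate $y$ by $y_n \in \O^c$ and set $\xi_n(t) = z + t(y_n - z)$; by the same verification these are integral trajectories of $F$ with $\xi_n(1) = y_n \in \O^c$. Apply the contrapositive of invariance: if $\xi_n(0) = z$ belonged to $\O$, then invariance would force $\xi_n(1) = y_n \in \O$, contradicting $y_n \in \O^c$. Hence $z \in \O^c$, and varying $z$ in a small neighborhood gives $z \in \Int \O^c$.

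The main subtle point I expect concerns the approximation step: for $y \in \ov\O \setminus \O$ we need a sequence $y_n \in \O$ with $y_n \to y$, which is built into the topological definition of closure, and symmetrically for $\O^c$. Everything else is careful bookkeeping — verifying the segments stay inside the open ball $\B(x,r_V(\d,1))$ so Lemma \ref{corkey} applies uniformly in $t$, and verifying that the open set $\Int F_\d(x)$ absorbs the perturbation $z - y \mapsto z - y_n$ (and the perturbation of $z$ used to reach the interior) for all sufficiently large $n$ and small radii.
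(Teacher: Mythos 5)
Your proof is correct and follows essentially the same route as the paper: in both, one reduces to straight-line segments, applies Lemma \ref{corkey} (using convexity and openness of $\B(x,r_V(\d,1))$ to keep the whole segment in the ball) to certify them as integral trajectories of $F$, invokes invariance, and then upgrades from the set to its interior via openness. The only cosmetic difference is that the paper first records the inclusion $(y+\Int F_\d(x))\cap U\subset\Int\O$ for $y\in\O\cap U$ and then handles $y\in\ov\O$ by the translation $y+q=y_n+(q+y-y_n)$, whereas you run the segment directly from an approximating point $y_n\in\O$ to the target $z$ — the two are interchangeable.
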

\begin{proof} To ease notations, we set $U =\B  (x, r_V(\d,1))$.  First take $y \in \Omega  \cap  U$ and $q \in F_\d(x)$ with $y + q \in
U$, then the curve $y + t \, q$, $t \in [0,1]$, is an integral curve
of $F$ in force of Lemma \ref{corkey}. This shows
\[ (y + F_\d(x)) \cap  U \subset \O\]
and consequently
\begin{equation}\label{inva1}
    \Int \big ((y + F_\d(x)) \cap U) = (y + \Int F_\d(x)) \cap U
\subset \Int \O \qquad\hbox{for any $y \in \O \cap  U$.}
\end{equation}
 Now consider  $y \in \ov\Omega \cap U$ and denote by $y_n$ a sequence in $\Omega \cap
U$ converging to $y$. If $q \in \Int F_\d(x)$ with $y + q \in U$, then
\[ q + y-y_n \in \Int F_\d(x) \txt{for $n$ sufficiently large}\]
and consequently
\[y+q = y_n + (q +y-y_n) \in (y_n + \Int F_\d(x)) \cap U. \]
This proves
\[ (y + \Int F_\d(x)) \cap U \subset  \bigcup_n \, (y_n + \Int F_\d(x)) \cap U,\]
since $y_n \in \Omega \cap U$ for any $n$,  all the sets in the
right hand--side of the previous formula are contained in $\Int
\Omega$ by \eqref{inva1}.  The first inclusion of the claim is then
proved. The latter can be deduced by slightly adapting the same
argument.
\end{proof}

\medskip

\noindent \begin{proof} [ Proof of Theorem \ref{invariant}]
Let $\Omega$ be an invariant set. Given $x \in \p \O$, we select $\d \in (0,1)$ and
set $U = \B(x,r_V(\d,1))$. We denote by $\proj$  the orthogonal projection on the space $x +
V(x)^\perp$ and consider the set
\[
    I:=\proj((x + \Int F_\d(x))\cap U) \cap
\proj((x- \Int F_\d(x)) \cap U),
\]
 which is  an open convex neighborhood of $x$.
\medskip

{\bf Claim 1.} \; {\em For any $y \in I$ there is one and only one
$\l=\l(y) \in \R$ such that $y + \l \,V(x) \in \partial \Omega \cap
U$}.

\smallskip
Let $y$ be in $I$, by the very definition of $I$ there are $\l_1$,
$\l_2$ with
\begin{eqnarray*}
  y + \l_2\,V(x) &\in&  (x+ \Int F_\d(x)) \cap U \\
  y + \l_1\,V(x) &\in&  (x- \Int F_\d(x)) \cap U
\end{eqnarray*}
note that  by Lemma \ref{aggiunto} $|V(x)| \geq 1$, then   $V(x)^\perp$ is a supporting hyperplane for both $x+  F_\d(x)$ and $x -  F_\d(x)$, and
\begin{eqnarray*}
  x+ \Int F_\d(x) &\subset& \{z \mid (z-x) \cdot V(x) > 0\} \\
  x - \Int  F_\d(x) &\subset& \{z \mid (z-x) \cdot V(x) < 0\}.
\end{eqnarray*}
This implies that  $\l_2 >0$ and $\l_1 <0$.  In addition, owing  to
the convexity of $U$ we have
\begin{equation}\label{inva3}
    y + \l \, V(x) \in U \txt{for any $\l \in [\l_1,\l_2]$.}
\end{equation}
 We deduce from Lemma \ref{lem2inva}
\begin{eqnarray*}
  y + \l_2\,V(x) &\in&  \Int \O \\
  y + \l_1\,V(x) &\in&  \Int \O^c.
\end{eqnarray*}
 We set
\begin{eqnarray*}
\l' &=& \inf \{ \l \in (\l_1,\l_2) \mid y + \l\,V(x) \in
\Int\Omega\} \\
\l'' &=& \sup \{ \l \in (\l_1,\l_2) \mid y + \l\,V(x) \in
\Int\Omega^c\}.
\end{eqnarray*}
We proceed proving by contradiction that $\l'= \l''$. In fact,  if $\l' >
\l''$ then
 \[ y + \l\,V(x) \in \partial\Omega \cap U \txt{for any $ \l \in [\l'',\l']$},\]
on the other side, we choose $\l$ in $[\l'', \l']$ so close to
$\l''$ that
\[(\l-\l'') \, V(x) \in \Int F_\d(x), \]
 and we deduce from  \eqref{inva3} and Lemma \ref{lem2inva}
 \begin{eqnarray*}
   y + \l \, V(x) &= &y + \l'' \, V(x) +(\l - \l'')\, V(x) \\ &\in& ((y+
\l'' \,V(x)) + \Int F_\d(x)) \cap U
    \subset \Int  \Omega,
 \end{eqnarray*}
  which is
impossible. If instead  $\l' < \l''$, then we set
\[\l_0 = \sup\{ \l \in [\l',\l''] \mid y + \l\,V(x) \in
\Int\Omega\}.
\]
 We find $\l< \l_0$ close to $\l_0$ with
\begin{eqnarray*}
  y + \l\,V(x) &\in& \Int\Omega \\
  (\l_0 - \l) \, V(x)  &\in& \Int F_\d(x)
\end{eqnarray*}
and
\begin{eqnarray*}
  y + \l_0\,V(x) &=& y + \l \, V(x) +(\l_0 - \l)\, V(x) \\
   &\in& ((y+
\l \,V(x)) + \Int F_\d(x)) \cap U \subset \Int  \Omega ,
\end{eqnarray*}
contrast with the definition of $\l_0 $. We have in conclusion
established by contradiction that $\l'=\l''$.

\medskip

{\bf Claim 2.} {\em  The function
\[ y \mapsto \{ \l \mid y+ \l\,V(x) \in \partial\Omega\}\]
 from $I$ to $\R$ is Lipschitz--continuous}.

The function, that we denote by $\psi$,  is univocally defined by the
previous claim, and  is continuous by its very definition and the
fact that $\partial\Omega$ is closed.  By squeezing a bit $I$, we
can in addition suppose without loosing generality that $\psi$ is
uniformly continuous  so that we can determine $\eps$ with
\[|y_1-y_2| < \eps \, \Rightarrow \, |\psi(y_1) - \psi(y_2)| < 1.\]
We pick such a pair $y_1$, $y_2$, and  assume that   $\psi(y_1) >
\psi(y_2)$, therefore
\[\B( (\psi(y_1) - \psi(y_2))\,V(x),( \psi(y_1) - \psi(y_2))\, \d ) \
\subset \Int F_\d(x)\] and consequently by Lemma \ref{lem2inva}
\[ ((y_2 + \psi(y_2) \, V(x)) + \B( (\psi(y_1) - \psi(y_2))\,V(x),( \psi(y_1) - \psi(y_2))\, \d
)) \cap U \subset \Int \O.\] Since $y_1 + \psi(y_1) \, V(x) \in \p
\O \cap U$, we must therefore have
\begin{eqnarray*}
 |y_2 -y_1| &=&|y_1 + \psi(y_1) \, V(x) - y_2 - \psi(y_2) \, V(x) - (\psi(y_1) - \psi(y_2))\,V(x)|  \\
   &\geq& ( \psi(y_1) - \psi(y_2))
\d
\end{eqnarray*}
This shows the claim. To conclude, it is enough to notice, see Definition \ref{lipp}, that by Lemma \ref{lem2inva} the
set
\[ \{ y + \l \, \psi(y) \mid  y \in I, \, \l > \psi(y)\}\]
is contained in $\Int \O$.
\end{proof}

\medskip
\begin{definition} We say that a subset $A \subset \R^N$ is { \em periodic}
 if
\[ x \in A \Rightarrow  x +  z \in A \txt{for any $z \in
\Z^N$.}\]
\end{definition}

\smallskip
The main consequence of the  Theorem \ref{invariant} is:

\begin{theorem}\label{divergence}
The dynamics given by  $F$  does not have any invariant periodic subset.
\end{theorem}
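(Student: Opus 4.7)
The plan is to argue by contradiction: assume a periodic invariant set $\Omega \subset \R^N$ exists. By Theorem \ref{invariant} its boundary is Lipschitz, and by periodicity $\Omega$ descends to a Lipschitz subset $\Theta \subset \T^N$ with $0 < |\Theta| < 1$ and $\Haus^{N-1}(\p\Theta) > 0$.

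The first step is to extract a pointwise boundary inequality. At $\Haus^{N-1}$-a.e.\ $x \in \p\Theta$ the outward unit normal $n_\Theta(x)$ is defined as in Remark \ref{normale}, and Lemma \ref{lem2inva} applied with $y=x$ gives
\[(x + \Int F_\d(x))\cap U \subset \Int \Theta \qquad \text{for every } \d \in (0,1).\]
At such a regular boundary point any $q$ with $q \cdot n_\Theta(x) > 0$ satisfies $x + tq \notin \ov\Theta$ for all sufficiently small $t > 0$, so this inclusion forces $F_\d(x) \subset \{q : q\cdot n_\Theta(x) \le 0\}$. Letting $\d \uparrow 1$ and reading off the support function of $F(x) = \ov\co(\B(V(x),1)\cup\{0\})$ in direction $n_\Theta(x)$ yields
\[V(x)\cdot n_\Theta(x) \le -1 \qquad \Haus^{N-1}\text{-a.e.\ on } \p\Theta.\]

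Integrating this via the divergence theorem on the torus,
\[\int_{\Theta} \dv V \,dx \;=\; \int_{\p\Theta} V\cdot n_\Theta\, d\Haus^{N-1} \;\le\; -\Haus^{N-1}(\p\Theta),\]
and, since $\int_{\T^N} \dv V = 0$, the same information reads $\int_{\Theta^c}\dv V\, dx \ge \Haus^{N-1}(\p\Theta)$ on the complement. Writing $P := \Haus^{N-1}(\p\Theta)$, $a := \|\dv V\|_{L^N(\Theta)}$ and $b := \|\dv V\|_{L^N(\Theta^c)}$, H\"older's inequality with exponent $N$ produces
\[P \le a\,|\Theta|^{(N-1)/N} \qquad\text{and}\qquad P \le b\,|\Theta^c|^{(N-1)/N},\]
while assumption ({\bf A2}) says $a^N + b^N \le \chi^{-N}$.

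To close the argument, apply the torus isoperimetric inequality to whichever of $\Theta, \Theta^c$ has volume at most $1/2$; say $|\Theta| \le 1/2$, so $|\Theta|^{(N-1)/N} \le \chi P$. Combined with $P \le a\,|\Theta|^{(N-1)/N}$ this forces $a \ge \chi^{-1}$, whence $a^N \ge \chi^{-N}$ and consequently $b = 0$ by ({\bf A2}). But then $\dv V \equiv 0$ on $\Theta^c$, contradicting $\int_{\Theta^c}\dv V \ge P > 0$. The main delicate point is the first step---translating the open-set inclusion from Lemma \ref{lem2inva} into the pointwise normal bound $V\cdot n_\Theta \le -1$; the rest of the proof is a tight matching of Gauss--Green against H\"older and the sharp constant in ({\bf A2}).
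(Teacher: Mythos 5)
Your proof is correct and runs on the same chassis as the paper's: project to $\Theta\subset\T^N$ (Lipschitz boundary by Theorem~\ref{invariant}), derive $V\cdot n_\Theta\le -1$ $\Haus^{N-1}$-a.e.\ on $\p\Theta$ from the cone inclusion in Lemma~\ref{lem2inva} (this is exactly the content of the paper's Lemma~\ref{perimeter}, which you re-derive inline rather than cite), and then combine Gauss--Green, H\"older and the torus isoperimetric inequality against hypothesis~(A2). Where you part ways is in how the contradiction is closed. The paper writes a single chain
\[
\Haus^{N-1}(\p\Theta)\le \Big|\int_\Theta \dv V\Big|\le \|\dv V\|_{L^N(\T^N)}\,|\Theta|^{1-1/N}\le \frac{1}{\chi}\,|\Theta|^{1-1/N},
\]
asserts a strict inequality, and invokes the isoperimetric inequality; but as written the chain only yields a non-strict bound, so the equality case is not visibly excluded, and the isoperimetric inequality on $\T^N$ (for the constant $\chi=\chi_{1/2}$) applies to $\Theta$ only when $|\Theta|\le 1/2$, a point the paper leaves tacit. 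Your version patches both issues simultaneously: splitting $\dv V$ over $\Theta$ and $\Theta^c$ and using $\int_{\T^N}\dv V=0$ gives the two-sided flux bound $|\!\int_\Theta\dv V|\ge P$ and $|\!\int_{\Theta^c}\dv V|\ge P$, applying the isoperimetric inequality to whichever of $\Theta,\Theta^c$ has measure $\le 1/2$ forces the entire $L^N$-budget $\chi^{-N}$ of $\dv V$ onto that side, and then the opposite-side flux bound $\ge P>0$ is violated. So yours is the same approach but is, if anything, the more careful one---it makes explicit the cancellation $\int_{\T^N}\dv V=0$ and the volume normalization that the paper's short derivation relies on silently.
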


\smallskip
\begin{remark}  Note that invariant non periodic sets for $F$ could exist in our setting. It is enough to consider for instance $V(x)$ constant.
\end{remark}

\smallskip
We preliminarily give  two lemmata. We define for $x \in \R^N$
\[Z^\i(x)= \{p \mid H(x,p) \leq 0\},\]
It is apparent that $Z^\i(x)$ is the recession  cone of the sublevels $\{p \mid H(x,p) \leq a\}$ for $a >0$.

\begin{lemma}\label{corco} \hfill
\[ F(x)^-=Z^\i(x) \txt{for any $x \in \R^N$.}\]
\end{lemma}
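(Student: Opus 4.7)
The plan is to verify the equality by a direct computation with the definitions, exploiting the support-function characterization of the polar cone and the simple explicit form of $F(x)$.

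First, I would note the trivial case $p=0$: we have $0 \cdot q = 0 \leq 0$ for all $q$, so $0 \in F(x)^-$, and also $H(x,0)=0$, so $0 \in Z^\infty(x)$. From here on assume $p \neq 0$.

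For the inclusion $Z^\infty(x) \subset F(x)^-$, I would take $p$ with $|p| + p \cdot V(x) \leq 0$ and show $p \cdot q \leq 0$ for every generator of the convex hull defining $F(x)$. If $q \in \overline{\B(V(x),1)}$, write $q = V(x) + w$ with $|w| \leq 1$, so Cauchy--Schwarz gives
\[ p \cdot q = p \cdot V(x) + p \cdot w \;\leq\; p \cdot V(x) + |p| \;\leq\; 0.\]
The inequality $p \cdot q \leq 0$ also holds trivially for $q=0$, hence it extends to the convex hull and to its closure, giving $p \in F(x)^-$.

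For the reverse inclusion $F(x)^- \subset Z^\infty(x)$, I would choose the single worst test vector, namely $q_* := V(x) + p/|p|$, which belongs to $\overline{\B(V(x),1)} \subset F(x)$. If $p \in F(x)^-$, evaluating against $q_*$ yields
\[ 0 \;\geq\; p \cdot q_* \;=\; p \cdot V(x) + \frac{|p|^2}{|p|} \;=\; p \cdot V(x) + |p| \;=\; H(x,p),\]
so $p \in Z^\infty(x)$.

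I do not expect a real obstacle here; the statement is essentially the observation that $F(x)$ is the (closed) convex hull of the origin together with a unit ball centered at $V(x)$, whose support function in the direction $p/|p|$ is exactly $V(x) \cdot p/|p| + 1$. Writing out the two inclusions as above just expresses this in the language of the polar cone. The only things to be careful about are separating the $p=0$ case and remembering that taking the closed convex hull preserves the linear inequality $p \cdot q \leq 0$, which I would mention but not belabor.
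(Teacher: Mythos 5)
Your proof is correct and takes essentially the same route as the paper's: both directions hinge on the same two observations, that $p\cdot(V(x)+w)\le p\cdot V(x)+|p|$ for $|w|\le 1$, and that $q_*=V(x)+p/|p|$ is the extremal element of $F(x)$ to test against. You are marginally more careful than the paper in isolating the trivial case $p=0$ (where $p/|p|$ is undefined) and in noting that the linear inequality passes from the generators to the closed convex hull.
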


\begin{proof}  An element of $F(x)$ can be written in the form
\[\l \, (V(x) + q) \txt{for $0 \leq \l \leq 1$, $|q| \leq 1$.}\]
Let $p \in Z^\i(x)$, then
\[p \cdot (V(x) + q) \leq - |p| + p \cdot q \leq 0,\]
which shows that
\[Z^\i(x) \subset F(x)^-.\]
Conversely, let $p \in F(x)^-$, then $V(x) + \frac p{|p|} \in F(x)$,
and so
\[ 0 \geq p \cdot \left ( V(x) + \frac p{|p|} \right ) = H(x,p),\]
which implies that $p \in Z^\i(x)$. This concludes the proof.
\end{proof}

\medskip

\begin{lemma}\label{perimeter}  Let $\Omega \subset \R^N$ be an  invariant set for $F$. Then
\begin{equation}\label{perimeter1}
     n_\O(x) \cdot V(x) \leq  -1  \txt{for $\Haus^{N-1}$  a.e. $x \in
\p\Omega$,}
\end{equation}
where  $n_\O$ denotes the outer unit normal to $ \Omega$ and
$\Haus^{N-1}$ is the $(N-1)$--dimensional Hausdorff measure.
\end{lemma}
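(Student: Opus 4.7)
The plan is to combine the invariance of $\Omega$ with the projection characterization of the outer normal (Remark \ref{normale}) to show that, at $\Haus^{N-1}$--a.e.\ boundary point, $n_\Omega(x)$ lies in the negative polar cone $F(x)^-$; then Lemma \ref{corco} converts this into the required inequality because $F(x)^-$ coincides with $\{p \mid H(x,p) \leq 0\}$.

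To execute this, I would first invoke Theorem \ref{invariant}: since $\Omega$ has Lipschitz boundary, $n_\Omega(x)$ exists for $\Haus^{N-1}$--a.e.\ $x \in \partial\Omega$, and at such points Remark \ref{normale} gives $\proj_\Omega(x + t\,n_\Omega(x)) = x$ for all sufficiently small $t>0$. Fix such an $x$ and a parameter $\d \in (0,1)$. For any $p \in \Int F_\d(x)$, the convex combination $sp = (1-s)\cdot 0 + s \cdot p$ (with $0 \in F_\d(x)$) remains in $\Int F_\d(x)$ for every $s \in (0,1]$, so Lemma \ref{lem2inva} applied with $y = x \in \ov\Omega$ yields $x + sp \in \Int\Omega$ for all sufficiently small $s>0$.

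The crux is then the geometric claim $n_\Omega(x) \cdot p \leq 0$. Suppose, for contradiction, $n_\Omega(x)\cdot p > 0$. From the expansion
\[|t\,n_\Omega(x) - sp|^2 = t^2 - 2ts\,(n_\Omega(x)\cdot p) + s^2|p|^2,\]
one sees that $|t\,n_\Omega(x) - sp| < t$ whenever $0 < s < 2t\,(n_\Omega(x)\cdot p)/|p|^2$. Choosing first $t$ small enough for the projection identity to trigger and then $s$ much smaller (still ensuring $x + sp \in \Int\Omega$) produces a point of $\ov\Omega$ strictly closer to $x + t\,n_\Omega(x)$ than $x$ itself, contradicting $\proj_\Omega(x + t\,n_\Omega(x)) = x$. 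Hence $n_\Omega(x) \in F_\d(x)^-$ for every $\d \in (0,1)$; since $\bigcup_{\d < 1} F_\d(x)$ is dense in $F(x)$, I conclude $n_\Omega(x) \in F(x)^-$, and Lemma \ref{corco} rewrites this as $1 + n_\Omega(x)\cdot V(x) \leq 0$, i.e.\ \eqref{perimeter1}. The step I expect to be the most delicate is precisely this geometric contradiction: it requires coordinating the two infinitesimal scales so that the linear gain of order $ts$ beats the quadratic loss of order $s^2$, forcing $s$ to be chosen after and much smaller than $t$.
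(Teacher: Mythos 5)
Your proof is correct and follows essentially the same route as the paper: use Lemma~\ref{lem2inva} to identify $n_\Omega(x) \in F_\delta(x)^-$, intersect over $\delta \in (0,1)$, and convert via Lemma~\ref{corco} into $H(x,n_\Omega(x)) \leq 0$. The only difference is that you spell out, via the quadratic expansion and the projection characterization of Remark~\ref{normale}, the step the paper leaves as an unjustified ``implies'' (namely, that $(x + \Int F_\delta(x)) \cap U \subset \Int\Omega$ forces $n_\Omega(x)$ into the normal cone of $x + F_\delta(x)$ at $x$).
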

\begin{proof}  Recall that $\Omega$ has
Lipschitz boundary  by Theorem \ref{invariant}.
 Let $x$ be a point of $\p\O$ where there exists the  outer normal $n_\O(x)$, see Remark \ref{normale}. We recall
that by Lemma \ref{aggiunto} $|V(x)| \geq 1 $.   By Lemma \ref{lem2inva}, we can find for
any $\d \in (0,1)$ an open ball $U$ centered at $x$ with
\[(x + \Int F_\d(x)) \cap U \subset  \Int\Omega, \]
  which implies  that $n_\O(x)$ is normal to $x + F_\d(x)$ at $x$
  and consequently $n_\O(x) \in F_\d(x)^-$.
Therefore, by Lemma \ref{corco}
\[n_\O(x) \in \bigcap_{\d \in (0,1)}  \, F_\d(x)^- = F(x)^-=Z^\i(x), \]
or, in other terms
\[0 \geq H(x,n_\O(x)) \geq 1 + n_\O(x) \cdot V(x).\]
This proves \eqref{perimeter1}.

\end{proof}

\medskip

\noindent  \begin{proof}[Proof of Theorem \ref{divergence} ] Assume for purposes of contradiction that there
exists a periodic  invariant set for $F$. We denote by $\Theta$ its
projection on $\T^N$,  by Theorem \ref{invariant} $\Theta$ possess
Lipschitz boundary, so that we are  in position to apply
divergence Theorem and, owing to  Lemma \ref{perimeter}, we
get
\[\Haus^{N-1}(\partial\Theta) \leq \int_{\partial\Theta} -  n_\Theta \cdot V
\,d\,\Haus^{N-1}= \left | \int_\Theta \dv V \, dx \right
|\leq \|\dv\, V\|_{L^N(\T^N)} \, |\Theta|^{1 - 1/N}.\] Exploiting
assumption {\bf (A2)} on $\dv V$ we deduce
\[ \Haus^{N-1}(\partial\Theta) < \frac 1\chi \, |\Theta|^{1 - 1/N},\]
which is contrast with the isoperimetric inequality, see Section \ref{structure}.
\end{proof}

\section{ Intrinsic distances}\label{distone}

We associate to the $1$--sublevels of $H$   an intrinsic distance. The first step is to define for any $x$, $q$ in $\R^N$
 \begin{eqnarray*}
Z(x) &=& \{p \mid H(x,p) \leq 1 \} \\
\s(x,q) &=& \sup \{p \cdot q \mid p \in Z(x)\} \ \end{eqnarray*}
 The set--valued function $Z$ is closed
convex valued and   $0$ is an interior point of $Z(x)$ for any $x$. This
implies that  $\s(x,q)$ is strictly positive when $q \neq 0$.

For $|V(x)|$ increasing,  the sublevel $Z(x)$  shrinks
in the direction of $V(x)$ and stretches  in the opposite one,
assuming more or less the shape of a water drop, till $|V(x)|$
reaches the threshold value of $1$, where it breaks apart and the unbounded recession cone $Z^\infty(x)$ pops up as a subset.
Accordingly,  the support function $\s(x,\cdot)$  becomes infinite
at some $q$.

The next result is straightforward, we provide a proof in the Appendix for completeness.

\medskip

\begin{proposition}\label{sigmalsc}  The function $(x,q) \mapsto \s(x,q)$  from $\R^n \times \R^N$ to
 $ \R^+ \cup \{+ \infty\}$ is lower
semicontinuous. In addition $q \mapsto \s(x,q)$ is convex
positively homogeneous, for any fixed $x$.
\end{proposition}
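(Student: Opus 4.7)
The plan is to recognize $\sigma(x,\cdot)$ as the support function of the closed convex set $Z(x)$ (convexity of $Z(x)$ comes from the convexity of $H$ in $p$). Convexity and positive homogeneity in $q$ for fixed $x$ then follow immediately from the standard properties of support functions: both subadditivity (hence convexity) and positive homogeneity are direct consequences of the supremum formula defining $\sigma$.

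The main point is joint lower semicontinuity in $(x,q)$. I would reduce this to showing that the set-valued map $x \mapsto Z(x)$ is inner lower semicontinuous, in the sense that for every $p \in Z(x)$ and every sequence $x_n \to x$ there exist $p_n \in Z(x_n)$ with $p_n \to p$. Granting this, if $(x_n,q_n) \to (x,q)$ and $p \in Z(x)$, I pick such $p_n \in Z(x_n)$ and get
\[ \sigma(x_n,q_n) \geq p_n \cdot q_n \to p \cdot q, \]
so $\liminf_n \sigma(x_n,q_n) \geq p \cdot q$. Passing to the supremum over $p \in Z(x)$ yields $\liminf_n \sigma(x_n,q_n) \geq \sigma(x,q)$ when $\sigma(x,q)$ is finite. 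The case $\sigma(x,q) = +\infty$ is handled in the same way: for any $M > 0$ pick $p \in Z(x)$ with $p \cdot q > M$ and conclude $\liminf_n \sigma(x_n,q_n) \geq M$, which forces the left-hand side to be $+\infty$.

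For the inner lower semicontinuity of $Z$, the key ingredient is that $0 \in \Int Z(x)$ for every $x$, since $H(x,0) = 0 < 1$. By convexity, $t\,p \in \Int Z(x)$ for every $p \in Z(x)$ and every $t \in [0,1)$. Setting $p_n := (1-\eps_n)\, p$, I compute
\[ |p_n| + p_n \cdot V(x_n) \leq (1-\eps_n)\bigl(1 + |p|\,|V(x_n)-V(x)|\bigr), \]
so the continuity of $V$ permits choosing $\eps_n \downarrow 0$ slowly enough that the right-hand side is $\leq 1$ for all $n$ large, while $p_n \to p$; this gives the desired selection in $Z(x_n)$.

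The only delicate point is really the inner semicontinuity of $Z$, which could conceivably fail at boundary points of $Z(x)$ if $Z(x)$ had empty interior; but here $0 \in \Int Z(x)$ unconditionally, so the obstacle disappears. Everything else is routine manipulation of support functions.
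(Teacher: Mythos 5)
Your proof is correct and takes essentially the same approach as the paper: fix $(x_0,q_0)$, pick $p \in Z(x_0)$ nearly attaining the supremum (or exceeding an arbitrary threshold in the infinite case), approximate it by $p_n \in Z(x_n)$ using $0 \in \Int Z(x)$, and pass to the liminf. The only difference is that you spell out the explicit selection $p_n = (1-\eps_n)\,p$ with $\eps_n \downarrow 0$ chosen via the continuity of $V$, whereas the paper simply asserts the existence of such a sequence; your version is, if anything, slightly more careful in the $\sigma(x_0,q_0)=+\infty$ case, where the paper's displayed inequality implicitly uses the same approximating sequence without naming it.
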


\smallskip

The next result puts in relation the present construction and $F$.

\smallskip
\begin{lemma}\label{co}     We have
\[ \s(x,q) \leq 1 \txt{for any $x \in \R^N$, $q \in F(x)$.}\]
\end{lemma}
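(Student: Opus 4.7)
The plan is to exploit the fact that $\s(x,\cdot)$ is the support function of the closed convex set $Z(x)$, so it is convex, lsc and positively homogeneous (Proposition \ref{sigmalsc}). By convexity and lsc, to bound $\s(x,\cdot)$ on $F(x)=\ov\co\{\B(V(x),1)\cup\{0\}\}$ it is enough to bound it on the generating set $\B(V(x),1)\cup\{0\}$, and pass to convex combinations and limits.

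First I would handle the generators. For $q=0$ we have $\s(x,0)=0\leq 1$ trivially. For $q\in\B(V(x),1)$, write $q=V(x)+w$ with $|w|\leq 1$. For any $p\in Z(x)$, i.e.\ $|p|+p\cdot V(x)\leq 1$, Cauchy--Schwarz gives
\[
p\cdot q \;=\; p\cdot V(x)+p\cdot w \;\leq\; p\cdot V(x)+|p| \;\leq\; 1,
\]
so that $\s(x,q)=\sup_{p\in Z(x)} p\cdot q\leq 1$.

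Next I would propagate the bound to the whole of $F(x)$. Any element of $\co\{\B(V(x),1)\cup\{0\}\}$ has the form $q=\l\,(V(x)+w)$ with $\l\in[0,1]$ and $|w|\leq 1$; by positive homogeneity of $\s(x,\cdot)$ the previous computation yields $\s(x,q)\leq\l\leq 1$. Finally, for a generic $q\in F(x)$ one has $q=\lim q_n$ with $q_n\in\co\{\B(V(x),1)\cup\{0\}\}$ and $\s(x,q_n)\leq 1$; the lower semicontinuity of $\s(x,\cdot)$ from Proposition \ref{sigmalsc} yields
\[
\s(x,q)\leq \liminf_n \s(x,q_n)\leq 1.
\]

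There is no real obstacle here, the only subtle point being that $Z(x)$ may be unbounded when $|V(x)|\geq 1$, so $\s(x,\cdot)$ can take the value $+\infty$; however the above computation uses only the defining inequality of $Z(x)$ and therefore gives the finite bound $1$ uniformly in $p\in Z(x)$, regardless of the size of $Z(x)$. The passage to $\ov\co$ is handled by lsc rather than by boundedness/compactness considerations.
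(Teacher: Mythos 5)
Your proof is correct and follows essentially the same route as the paper: the key inequality $p\cdot q = p\cdot V(x)+p\cdot w\leq p\cdot V(x)+|p|\leq 1$ on the generators of $F(x)$ is identical, after which the bound is propagated to $F(x)=\ov\co\{\B(V(x),1)\cup\{0\}\}$. The paper leaves the convex-hull step implicit (one line), whereas you spell out the passage to convex combinations via positive homogeneity and to the closure via lower semicontinuity; both are fine, and one could equally note that $\{q:\s(x,q)\leq 1\}$ is closed and convex (since $\s(x,\cdot)$ is lsc and convex) and therefore absorbs the closed convex hull automatically.
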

\begin{proof}
Given
\begin{equation}\label{co2}
   q  = V(x) + \bar q  \txt{with $\bar q$ unit vector},
\end{equation}
 $p \in Z(x)$, we
have
\[p \cdot q = p \cdot V(x) + p \cdot \bar q \leq p \cdot V(x) + |p|
\leq 1,\] which shows that $ \s(x,q) \leq 1$. Since $F(x)$ is the convex
hull of vectors of the form \eqref{co2} plus $0$, we get the assertion.

\end{proof}

\medskip

For any  curve $\xi:[0,T] \to \R^N$ we define
the {\em intrinsic length functional} $\ell_V$ by
\[\ell_V(\xi) = \int_0^T \s(\xi,\dot\xi)\, dt. \]
Notice that
 the above integral is invariant for orientation--preserving
change of parameter, as an intrinsic length should be. It is in addition positive, and can be infinite.   The
corresponding {\em path--metric} $S_1$ is given by
\begin{equation}\label{metro0}
 S_1(x,y)= \inf\{\ell_V(\xi) \mid \;\hbox{$\xi$ joining $x$ to
$y$}\},
\end{equation}
for any $x$, $y$ in $\R^N$. The (semi)distance $S_1$ is
strictly positive whenever the two arguments are distinct, and can be infinite. Note that $S_1(x,x)=0$ for any $x$, in addition $S_1$   enjoys the triangle property, but it is not in general
symmetric.
\medskip

We provide a periodic version of $S_1$ via the formula
\begin{equation}\label{metro1}
  \ov S_1(x,y) = \inf \{S_1(x+z,y + z') \mid z, \, z' \in \Z^N\},
\end{equation}
Due to the periodic character of $H$, we also have
\[\ov S_1(x,y)=\inf \{S_1(x+z,y ) \mid z  \in \Z^N\} = \inf \{S_1(x,y + z) \mid z  \in
\Z^N\}.\]
\smallskip

The following  lemma will be used in Section  \ref{effectivesec}.

 \begin{lemma}\label{corkeytris} Let $P$ be an element of $\R^N$, $\d <1$. For any $x \in \R^N$ there are two open sets $U_x$, $ W_x$ with $x \in  \overline{ U_x }\cap
  \overline{ W_x}$ such that
 \begin{eqnarray}
  \label{tris0} S_1(x,y) + P \cdot (x-y) & < 1 + |P| \, r_V(\d,1)& \txt{for any $y \in U_x$}  \\
  \label{tris00}  S_1(y,x) + P \cdot (y-x) & < 1  + |P| \, r_V(\d,1) & \txt{for any $y \in  W_x$}.
 \end{eqnarray}
 In addition both families $\{U_x\}$, $\{ W_x\}$, for $x \in \R^N$,   make up an open covering of the whole space.
\end{lemma}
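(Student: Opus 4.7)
The plan is to define $U_x$ and $W_x$ as the straight-line reachable neighborhoods of $x$ whose tangent direction lies strictly inside the set-valued field $F_\d$. For each $x \in \R^N$, set
\[
U_x = \{y \in \B(x, r_V(\d,1)) : y - x \in \Int F_\d(x)\},
\quad
W_x = \{y \in \B(x, r_V(\d,1)) : x - y \in \Int F_\d(x)\},
\]
which are open. To check $x \in \ov{U_x} \cap \ov{W_x}$, I would approach $x$ along the rays $x \pm \eps (V(x) + \bar q)$ for any fixed $\bar q$ with $|\bar q| < \d$: for $\eps > 0$ small these points lie in $U_x$ and $W_x$ respectively, and converge to $x$.

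For the bound \eqref{tris0}, given $y \in U_x$, I would exploit the interior membership $y - x \in \Int F_\d(x)$ to produce $\t = \t(y) > 0$ with $(1+\t)(y-x) \in F_\d(x)$. The straight segment $\xi(t) = x + t(y-x)$, $t \in [0,1]$, stays in $\B(x, |y-x|) \subset \B(x, r_V(\d,1))$, so Lemma \ref{corkey} yields $F_\d(x) \subset F(\xi(t))$ pointwise along the curve. Consequently $(1+\t)(y-x) \in F(\xi(t))$, and combining Lemma \ref{co} with the positive homogeneity of $\s(\xi(t),\cdot)$ gives $\s(\xi(t), y-x) \leq 1/(1+\t)$. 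Integrating, $S_1(x,y) \leq \ell_V(\xi) \leq 1/(1+\t) < 1$; adding $P \cdot (x-y) \leq |P|\,|x-y| \leq |P|\,r_V(\d,1)$ then yields \eqref{tris0} with strict inequality, regardless of whether $|P|$ vanishes. The argument for \eqref{tris00} on $W_x$ is symmetric: use $\eta(t) = y + t(x-y)$, observe that $|\eta(t) - x| = (1-t)|x-y| < r_V(\d,1)$ so Lemma \ref{corkey} still gives $F_\d(x) \subset F(\eta(t))$, and run the identical chain.

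For the covering statement, given $z \in \R^N$, I would choose $\bar q_0$ with $|\bar q_0| < \d$ and $V(z) + \bar q_0 \neq 0$, and set $x_\eps = z - \eps(V(z) + \bar q_0)$. Writing $z - x_\eps = \eps(V(x_\eps) + \bar q_\eps)$ with $\bar q_\eps := \bar q_0 + V(z) - V(x_\eps)$, the Lipschitz estimate $|\bar q_\eps| \leq |\bar q_0| + L_V \eps|V(z)+\bar q_0|$ shows $|\bar q_\eps| < \d$ and $|z - x_\eps| < r_V(\d,1)$ for sufficiently small $\eps$, hence $z \in U_{x_\eps}$. The mirror choice $x_\eps = z + \eps(V(z) + \bar q_0)$ handles $W$.

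The main subtlety will be extracting \emph{strict} inequality when $|P|=0$: the bound $\s(\xi, y-x) \leq 1$ from Lemma \ref{co} alone is not enough, and interior membership in $F_\d(x)$ is exactly what supplies the necessary margin $1/(1+\t) < 1$.
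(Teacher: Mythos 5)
Your proof is correct and follows essentially the same approach as the paper: the sets $U_x$, $W_x$ are defined identically as $(x \pm \Int F_\d(x)) \cap \B(x, r_V(\d,1))$, the estimates \eqref{tris0}--\eqref{tris00} come from bounding the intrinsic length of the straight segment via Lemma \ref{corkey} and Lemma \ref{co}, and the covering is obtained by shifting the base point a small amount along $-V$. You are somewhat more careful than the paper's own argument in two spots. First, you make the strictness of $S_1(x,y) < 1$ fully explicit by extracting a stretch factor $1+\tau$ from the interior membership $y-x \in \Int F_\d(x)$ and using positive homogeneity of $\s$; the paper asserts $\s(x+t(y-x),y-x)<1$ without spelling out where the strict inequality comes from, and this margin is precisely what is needed when $|P|=0$. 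Second, your covering argument via $x_\eps = z - \eps(V(z)+\bar q_0)$ with the correction term $\bar q_\eps$ directly yields $|\bar q_\eps| < \d$ for small $\eps$, which is exactly the estimate needed to place $z - x_\eps$ in $\eps\,\B(V(x_\eps),\d) \subset \Int F_\d(x_\eps)$; by contrast, the paper's choice $\b < (1-\d)/(M_V L_V)$ only gives $|V(x)-V(y)| \le \b L_V M_V < 1-\d$, which secures $x-y \in \Int F_\d(y)$ only when $1-\d \le \d$, i.e.\ $\d \ge 1/2$ — your version works for every $\d\in(0,1)$ as the lemma requires.
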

\begin{proof}
Given $x \in \R^N$,  we have   by  Lemma
\ref{corkey} that
\begin{equation}\label{tris1}
     F_\d(x) \subset  F(y) \txt{for any $y \in
     \B  (x, r_V(\d,1))$}.
\end{equation}
 We set
 \begin{eqnarray*}
   U_x &=& (x + \Int F_\d(x) ) \cap \B  (x, r_V(\d,1)) \\
   W_x &=& (x - \Int F_\d(x) ) \cap  \B  (x, r_V(\d,1)),
 \end{eqnarray*}
if $y \in U_x$ then $y- x \in  \Int F_\d(x)$ and $x + t \, (y -x) \in  \B  (x, r_V(\d,1))$, for $t \in [0,1]$, we therefore  derive  from \eqref{tris1} and Lemma \ref{co}
\[ \s(x+ t \, (y-x), y-x)  < 1  \txt{for $t \in [0,1]$.}\]
 This implies that the intrinsic length of the segment
$x + t \, (y-x)$, $t \in [0,1]$ is less than   $1$ showing \eqref{tris0}.
Item \eqref{tris00} can be proven  following the same lines.

We proceed proving  that for   any $x \in \R^N$  there is
$y$ with   $ x \in  U_y$. We select $\b >0$ with
\[ \b < \min \left \{ 1, \, \frac {1-\d}{M_V L_V} \right \},\]
and  set
\[y= x - \b \, V(x).\]
We get
\[|y-x| =  \b \, |V(x)| \leq \b \, M_V < \frac {1-\d}{ L_V}=r_V(\d,1)\]
and
\[|x - y - \b \, V(y)|= \b \, |V(x)-V(y)|< \b.\]
The above inequalities show  that $x \in U_y$. Adapting the above argument, we also find $z$ with $x \in  W_z$.
This concludes the proof.
\end{proof}

\smallskip

\begin{remark}\label{disto}  Slightly adapting the above construction, a distance, denoted by  $S_a$, $\ov S_a$ can be defined starting from the sublevels $\{p \mid H(x,p) \leq a\}$, for any $a >0$. Due to the positive homogeneity of $H$ in $p$, we have for any $x$, $y$ in $\R^N$
\begin{eqnarray*}
  S_a(x,y) &=& a \, S_1(x,y) \\
  \ov S_a(x,y) &=& a \, \ov S_1(x,y)
\end{eqnarray*}
Furthermore, it is easy to check that the intrinsic  periodic distance corresponding to the level $0$ is identically vanishing, thanks to  Theorem \ref{divergence}, while $\ov S_a$  is   identically equal to $- \infty$ for $a <0$.
\end{remark}

\bigskip

\section{Homogenization}\label{effectivesec}

We consider  the family of equations

\begin{equation}\label{hjp}
    H(x,Du +P)=a \quad a \in \R,
\end{equation}
with $P \in \R^N$ fixed, and  define  the effective Hamiltonian $\ov
H$ as
\begin{equation}\label{defeffe}
    \ov H(P)= \inf\{a \mid \hbox{\eqref{hjp} admits an
usc bounded periodic subsolution}\}.
\end{equation}

 The effective Hamiltonian is the one appearing  in the limit equation \eqref{HJ} of the homogenization procedure.

\smallskip

\begin{lemma} The effective Hamiltonian $\ov H$ is convex positively homogeneous  and, in addition
\[ \min_{x \in \R^N} H(x,P) \leq \ov H(P) \leq  \max_{x \in \R^N}
H(x,P)   \txt{for any $P \in \R^N$}\]
\end{lemma}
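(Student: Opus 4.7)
My plan is to dispatch the bounds and positive homogeneity first (both follow directly from the definition), and then address convexity, which is the substantive claim.

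For the upper bound $\ov H(P) \leq \max_x H(x, P)$, the constant $u \equiv 0$ serves as a trial subsolution: $D^+ 0 = \{0\}$, so the viscosity inequality reduces to the pointwise $H(x,P) \leq a$, which holds for $a = \max_x H(x,P)$. For the lower bound, given any usc bounded periodic subsolution $u$ of $H(x, Du + P) = a$, compactness of $\T^N$ yields a maximum point $x_0$; there $0 \in D^+ u(x_0)$ and the subsolution inequality gives $H(x_0, P) \leq a$, so that $\min_x H(x, P) \leq \ov H(P)$ after infimizing. For positive homogeneity, since $H(x, \cdot)$ is itself positively homogeneous of degree one, the bijection $u \mapsto \lambda u$ (for $\lambda > 0$) preserves upper semicontinuity, boundedness and periodicity, and carries the viscosity subsolution property at level $a$ with shift $P$ to the one at level $\lambda a$ with shift $\lambda P$; hence $\ov H(\lambda P) = \lambda \ov H(P)$.

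For convexity, I fix $a_i > \ov H(P_i)$ with corresponding usc bounded periodic subsolutions $u_i$, and set $u = \theta u_1 + (1-\theta) u_2$, $P = \theta P_1 + (1-\theta) P_2$, $a = \theta a_1 + (1-\theta) a_2$. My aim is to show that $u$ is a viscosity subsolution of $H(x, Du + P) = a$; this yields $\ov H(P) \leq a$, and infimizing over $a_i$ gives the convexity inequality. The engine is convexity of $H$ in $p$: were the $u_i$ Lipschitz, Rademacher together with the equivalence of a.e. and viscosity subsolutions for Hamiltonians convex in $p$ would immediately yield $H(x, Du + P) = H(x, \theta(Du_1 + P_1) + (1-\theta)(Du_2 + P_2)) \leq \theta a_1 + (1-\theta) a_2$ a.e., hence in the viscosity sense. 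Equivalently, appealing to the standard metric viscosity characterization of subsolutions of convex positively homogeneous Hamiltonians via the intrinsic distance $S_a$ from Section \ref{distone} (together with $S_a = a \, S_1$ from Remark \ref{disto}), each $u_i$ satisfies $u_i(y) - u_i(x) + P_i \cdot (y - x) \leq S_{a_i}(x, y)$, and a convex combination preserves this inequality at level $a$.

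The principal obstacle is reducing from usc bounded subsolutions to Lipschitz ones, as the noncoercivity of $H$ prevents automatic Lipschitz regularity. The standard sup-convolution $u^\delta$ produces a Lipschitz periodic approximation, but only as a subsolution of $H(x, Du^\delta + P) = a + \omega(x, \delta)$ with error controlled by $|Du^\delta + P| \cdot |x - y^\ast|$; since $|Du^\delta(x)| = |x - y^\ast|/\delta$, the error contains a term $|x - y^\ast|^2/\delta$ that vanishes only pointwise (by upper semicontinuity of $u$ at $x$) and not uniformly. This obstruction can be handled either by leaning on the metric characterization via $S_a$, where the convex combinations pass through cleanly at the level of intrinsic Lipschitz inequalities, or by a direct doubling-variable argument on $\theta u_1(x_1) + (1-\theta) u_2(x_2) - \phi(\theta x_1 + (1-\theta) x_2) - |x_1-x_2|^2/(2\eps)$, which requires controlling the blow-up of the penalty gradient $(x_1 - x_2)/\eps$ in the resulting test-function derivatives.
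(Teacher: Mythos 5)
Your handling of the two bounds and of positive homogeneity coincides with the paper's: constant functions give the upper bound, a maximum point of a usc bounded periodic subsolution gives the lower bound, and $u\mapsto \lambda u$ gives homogeneity. On convexity, however, you and the paper part ways in an interesting fashion: the paper dismisses it with the single phrase that $\ov H$ ``directly inherits'' convexity from $H$, while you correctly observe that this is not immediate because usc bounded subsolutions of the noncoercive Hamiltonian need not be Lipschitz, so the standard Rademacher plus a.e.-subsolution argument does not apply to the convex combination $\theta u_1+(1-\theta)u_2$. Your diagnosis of the obstruction, including the precise reason the sup-convolution error $|x-y^\ast|^2/\delta$ only vanishes pointwise and not uniformly, is accurate and more careful than the paper's.

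That said, the two repairs you sketch are not on solid ground. The first leans on the metric characterization $u(y)-u(x)+P\cdot(y-x)\le S_a(x,y)$ for usc bounded viscosity subsolutions, but that equivalence is established (e.g.\ in \cite{FS}) for Lipschitz subsolutions; the proof of ``viscosity $\Rightarrow$ metric'' integrates $s\mapsto u(\xi(s))$ along curves and needs absolute continuity of this composition, which fails for a general usc $u$. In the coercive setting the characterization applies to all subsolutions only because coercivity forces them to be Lipschitz; here that is exactly what is missing, so you would be assuming a version of the statement you are trying to reach. The second route, doubling of variables, faces the same quantitative difficulty you already identified. A cleaner path, consistent with the paper's overall architecture, is to defer the convexity claim until after Theorem \ref{effectiveness}: the $H_k$ are coercive, so each $\ov H_k$ is convex by standard theory, and $\ov H=\lim_k \ov H_k$ is then convex as a pointwise limit of convex functions. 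The lemma's convexity assertion is not used in the proofs leading up to Theorem \ref{effectiveness} (the lower semicontinuity of $\sigma$, the distance estimates, Lemma \ref{ovh}), so this reordering is legitimate and avoids the regularity issue you raised.
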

\begin{proof}
The effective Hamiltonian $\ov H$  directly inherits   convexity and positive homogeneity from  $H$. If $a \geq \max_{x \in \R^N} H(x,P)$ then any constant function  is a
subsolution to \eqref{hjp}, which gives the rightmost  inequality in
the statement.

Next, let $u$ be a  bounded periodic usc subsolution  to \eqref{hjp}.
  We   denote by $x_0$ a maximizer of it
in $\R^N$. Then $H(x_0,P) \leq a$ and so $\min_{x \in \R^N} H(x,P)
\leq a$. This completes the proof.
\end{proof}

\medskip

We proceed fixing $P$. Following the same procedure as in Section \ref{distone}, we see that  the intrinsic distance related to the sublevel $H(x, p +P) \leq a$ is
\begin{equation}\label{metro2}
   D_a(x,y) = S_a(x,y) + P \cdot (x-y),
\end{equation}
and the periodic version is
\begin{eqnarray}
  \ov D_a(x,y) &=& \inf_{z, \, z' \in \Z^N}\{S_a(x+z,y+z') + P \cdot (x+ z-y-z')\}   \label{metro3}\\
  &=& \inf_{z \in \Z^N}\{S_a(x,y+z) + P \cdot (x -y-z)\} \nonumber
\end{eqnarray}

\begin{remark} Note that $\ov D_a$ could be finite even for $a <0$, despite the fact that $\ov S_a$ is identically equal to $- \infty$. This is related to the problem  of the possible coercivity of $\ov H$. We do not treat this issue here, see \cite{CNS}, \cite{XY}.
\end{remark}

 The next result is a relevant consequence of Theorem  \ref{divergence}.

\begin{theorem}\label{bounded}   The distance $ \ov D_1$  is
 bounded  from above in $\R^N \times \R^N$.
\end{theorem}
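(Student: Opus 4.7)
By the $\Z^N$-periodicity of $\ov D_1$ in each argument, boundedness on $\R^N\times\R^N$ reduces to boundedness on the compact torus $\T^N\times\T^N$.  I would split the argument into two stages: first, prove pointwise finiteness of $\ov D_1$ using Theorem \ref{divergence}; second, upgrade to a uniform bound using Lemma \ref{corkeytris} and compactness.

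For the finiteness stage, I would fix $x_0\in\R^N$ and consider
\[
    S := \{y\in\R^N : \ov D_1(x_0, y) = +\infty\}.
\]
The set $S$ is $\Z^N$-periodic and proper, since $\ov D_1(x_0, x_0) = 0$.  The goal is to show that $S$ is invariant in the sense of Definition \ref{definva}, whence Theorem \ref{divergence} forces $S=\emptyset$.  For any $F$-curve $\xi:[0,t]\to\R^N$, Lemma \ref{co} gives $S_1(\xi(r),\xi(r'))\le r'-r$ for $r\le r'$, so $\ov D_1(\xi(r),\xi(r'))$ is finite.  Combined with the triangle inequality $\ov D_1(x_0,\xi(t)) \le \ov D_1(x_0,\xi(r)) + \ov D_1(\xi(r),\xi(t))$, the condition $\xi(t)\in S$ forces $\xi(r)\in S$, yielding the backward-invariance half.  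For the forward half, the companion set $S^* := \{y:\ov D_1(y,x_0)=+\infty\}$ is forward invariant by the symmetric argument with the triangle inequality reversed; combining $S$ and $S^*$ produces a periodic proper set fulfilling both halves of Definition \ref{definva}.  Theorem \ref{divergence} then forces this set to be empty, so $\ov D_1(x_0,y)<+\infty$ for every $y$, and by arbitrariness of $x_0$ the same holds for every pair.

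For the uniform-bound stage, the families $\{U_x\}$ and $\{W_x\}$ from Lemma \ref{corkeytris} are $\Z^N$-periodic open covers of $\R^N$, inducing open covers of $\T^N$.  By compactness, extract finite subcovers $\{U_{x_i}\}_{i=1}^k$ and $\{W_{y_j}\}_{j=1}^m$.  For any $x,y\in\R^N$, pick $i,j$ so that $y\in U_{x_i}$ and $x\in W_{y_j}$ (up to $\Z^N$-translation); Lemma \ref{corkeytris} yields
\[
    \ov D_1(x_i,y) \le C, \qquad \ov D_1(x,y_j) \le C,
\]
with $C := 1 + |P|\, r_V(\delta,1)$.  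By the finiteness from the first stage, $\ov D_1(y_j,x_i)<+\infty$ for each of the finitely many pairs $(i,j)$, so $M := \max_{i,j} \ov D_1(y_j,x_i)$ is finite.  The triangle inequality then gives
\[
    \ov D_1(x,y) \le \ov D_1(x,y_j) + \ov D_1(y_j,x_i) + \ov D_1(x_i,y) \le 2C + M,
\]
uniformly in $x,y$.

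The hard part will be verifying the bi-invariance of $S$ required by Definition \ref{definva}.  Backward invariance is immediate from the triangle inequality and Lemma \ref{co}, but forward invariance is subtler because $F$-curves are not in general reversible: the reverse of an $F$-curve need not lie in $F$, so the direct triangle argument fails.  The remedy is to exploit the symmetry between $S$ and $S^*$—one backward invariant, the other forward invariant—and combine them into a genuinely bi-invariant periodic proper subset to which Theorem \ref{divergence} applies.
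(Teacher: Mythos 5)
Your two-stage decomposition (pointwise finiteness via Theorem \ref{divergence}, then a uniform bound via Lemma \ref{corkeytris} and compactness) is exactly the paper's strategy, and your uniform-bound stage is essentially the paper's argument verbatim. The finiteness stage is also the paper's: the paper works with $\Omega = S^c = \{y : \ov D_1(x_0,y) < +\infty\}$ rather than with $S$, notes that $\Omega$ is periodic and nonempty (it contains $x_0$), and shows that any integral curve of $F$ issued from a point of $\Omega$ stays in $\Omega$ --- which is precisely your backward-invariance of $S$, read through the complement.

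Where the proposal falters is the patch you offer for the bi-invariance worry, and in fact you should not need a patch at all. You are right that Definition \ref{definva} as written asks for both halves and that only one half has been verified for $\Omega$; the paper itself does not address this point and checks only the forward half. But ``combining $S$ and $S^*$'' is left unspecified, and neither $S\cup S^*$ nor $S\cap S^*$ is readily seen to be bi-invariant (for instance, for $S\cap S^*$, propagating the condition $\ov D_1(x_0,\cdot)=+\infty$ forward along a curve would require control of the reversed distance $\ov D_1(\xi(s),\xi(0))$, which may be infinite). The clean resolution is instead to observe that every link in the chain Lemma \ref{aggiunto} $\to$ Lemma \ref{lem2inva} $\to$ Theorem \ref{invariant} $\to$ Lemma \ref{perimeter} $\to$ Theorem \ref{divergence} in fact uses only the ``forward'' half of Definition \ref{definva}: even the second inclusion in Lemma \ref{lem2inva}, which superficially looks backward, is obtained by running the segment $t\mapsto (y-q)+tq$ forward from $y-q$ to $y$ and invoking forward invariance of $\Omega$ in contrapositive form. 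Hence Theorem \ref{divergence} already rules out periodic proper sets that are merely forward invariant, and the one-sided invariance you established is all that the argument requires.
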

\begin{proof}  We first show that $\ov D_1$ is bounded from above in $\R^N \times \R^N$, provided it is finite.
 Given $\d <1$, we consider the families of open sets $\{U_x\}$,
$\{W_x\}$, for $x \in \ov Q_1$, introduced in Lemma
\ref{corkeytris}, and extract finite subfamilies    $\{U_i\}$, $i=1,
\cdots,M_1$, $\{W_j\}$, $j=1, \cdots, M_2$, respectively,
corresponding to points $x_i$, $\ov x_j$, covering $\ov Q_1$.

 Due to  Lemma \ref{corkeytris}, we have that for any $x \in \ov Q_1$
there are indices $i_0$, $j_0$ with
\begin{equation}\label{again007}
  \ov  D_1(x_{i_0},x) \leq 1 + |P| \, r_V(\d,1) \qquad\hbox{and} \qquad \ov D_1(x,\ov x_{j_0}) \leq 1 + |P| \, r_V(\d,1).
\end{equation}

Since we are supposing that $\ov D_1$ is finite, we have
\[ \ov  D_1(\ov x_j,x_i) \leq \a \txt{ for $i=1, \cdots,M_1$, $j=1,
\cdots,M_2$, some $\a >0$.}\] We take any pair of points $y$, $z$ in
$\ov Q_1$ and assume that \eqref{again007} holds true
with $z$ and $y$ in place of $x$, respectively. Then
\[ \ov D_1(y,z)  \leq  \ov D_1(y,\ov x_{j_0}) + \ov D_1(\ov x_{j_0}, x_{i_0}) +
\ov  D_1(x_{i_0},z) \leq 2 +  2 \, |P| \, r_V(\d,1) +\a,\]
showing the claim.

We proceed proving that since periodic  invariant sets for $F$ do  not exist in force of Theorem \ref{divergence},  then  $ \ov D_1$ is actually finite in $\R^N \times \R^N$.   It is  sufficient to show that, for any given   $y_0 \in \R^N$,  the set
\[ \O=\{ x \in \R^N \mid \ov D_1(y_0,x) < + \infty\}\]
is   periodic invariant for $F$. This in fact implies  that $\O= \R^N$, from which we in turn deduce, by the arbitrariness of $y$, that $ \ov D_1$ is finite.

It is clear by its very definition that $\O$ is periodic. To show the invariance, we take  $x_0 \in
\O$ and consider  an integral curve $\xi$  of $F$ defined in $[0,T)$ , for some $T \in \R^+ \cup \{+ \infty\}$,  issued from $x_0$. Then we have by Lemma \ref{co}
\begin{eqnarray*}
  \ov D_1(y_0,\xi(t))  &\leq& \ov D_1(y_0,x_0) + \ell_V \left (\xi\big |_{[0,t]} \right ) + P \cdot (x_0 - \xi(t)) \\
   &\leq&  \ov  D_1(y_0,x_0) + t + P \cdot (x_0 - \xi(t)) \txt{for any $t \in [0,T)$.}
\end{eqnarray*}
Therefore
the entire  curve $\xi$ is contained in $\O$, as claimed.
\end{proof}

\medskip

We proceed defining  a sequence  of coercive  Hamiltonians approximating $H$. We set for $k \in \N$
\[H_k(x,p) = \max \{H(x,p), -k \} + f_k(p)\]
where
\[f_k(p)= \left \{\begin{array}{cc}
  0 & \txt{if $|p| < k$} \\
  |p| - k & \txt{if $|p| \geq k$} \\
\end{array} \right . \]
and define the related intrinsic distances $S^k_a$, $\ov S^k_a$, $D_a^k$, $\ov D_a^k$ adapting formulae \eqref{metro0}, \eqref{metro1}, \eqref{metro2}, \eqref{metro3}.

The Hamiltonians  $H_k$ are  continuous,
periodic in the  state   and  convex in the momentum variable. They in addition satisfy:
\begin{itemize}
\item[--] the sequence $H_k(x,p)$ is nonincreasing for any $(x,p)$;
\item[--]  $H_k$ converges to $H$ locally uniformly in $\R^N \times
\R^N$ by Dini's monotone convergence theorem.
\end{itemize}
The previous properties clearly imply that
\[ H(x,p) \leq H_k(x,p) \qquad\hbox{ for any $k$, $x$, $p$.}\]
 The effective Hamiltonians
$\ov H_k$, corresponding to $H_k$, are defined as in
\eqref{defeffe} with obvious adaptations. Notice that, being $H_k$
coercive, any periodic  subsolution  to \eqref{hjp}, with $H_k$ in place of
$H$, is automatically Lipschitz--continuous. In addition the critical equation $H_k(x,P+Du) = \ov H_k(P)$ possess solutions.  We also have  that the $\ov
H_k$ are convex coercive.

\medskip

The main result of the section is:

\smallskip

\begin{theorem}\label{effectiveness} \hfill
\begin{itemize}
  \item[--] For any $P \in \R^N$, $\lim_k \ov H_k(P)= \ov H(P)$;
  \item[--] there exist  bounded periodic usc subsolutions and lsc  supersolutions to the equation $H(x,Du+P)= \ov H(P)$.
\end{itemize}
 \end{theorem}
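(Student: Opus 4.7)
The plan is to leverage the coercive approximations $H_k$: construct their periodic correctors, bound them uniformly in $k$ via Theorem \ref{bounded}, and pass to weak semilimits to produce both a subsolution and a supersolution for $H$ at the limit level. The identity $\lim_k \ov H_k(P) = \ov H(P)$ then follows by combining the monotone direction with a distance-based subsolution argument at nearby levels.

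First, since $H \leq H_k$ and $H_k$ is pointwise nonincreasing in $k$, any subsolution of $H_k(x,Du+P)=a$ is a subsolution both of $H(x,Du+P)=a$ and of $H_{k+1}(x,Du+P)=a$. Hence $\ov H(P) \leq \ov H_{k+1}(P) \leq \ov H_k(P)$, and the limit $\ov H^*(P) := \lim_k \ov H_k(P) \geq \ov H(P)$ exists. Each $H_k$ being coercive, continuous and convex, its critical equation $H_k(x, Dv_k+P) = \ov H_k(P)$ admits a Lipschitz periodic solution $v_k$; I would normalize $v_k(x_0) = 0$. For a uniform bound, note that $Z^k(x) \subset Z(x)$ gives $\s_k \leq \s$ pointwise and hence $\ov D_a^k \leq \ov D_a$ for every $a > 0$. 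Pick $A \geq \max(\ov H_1(P), 1)$, so that $\ov H_k(P) \leq A$ for all $k$; then $v_k$ being a subsolution of $H_k(\cdot) = A$ yields
\[
v_k(y) - v_k(x) \leq \ov D_A^k(x,y) \leq \ov D_A(x,y) \leq C,
\]
the last inequality being the analogue of Theorem \ref{bounded} at level $A$, whose proof carries over unchanged since Theorem \ref{divergence} supplies non-existence of periodic invariant sets for the underlying dynamics $F$ independently of $A$. Normalizing gives $\|v_k\|_\infty \leq C$ uniformly.

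Set $v^* := \limsups v_k$ and $v_* := \liminfs v_k$, both bounded periodic. Since $H_k \to H$ locally uniformly (Dini) and $\ov H_k(P) \to \ov H^*(P)$, the standard stability of viscosity sub/super solutions under semilimits implies that $v^*$ is a bounded usc periodic subsolution and $v_*$ a bounded lsc periodic supersolution of $H(x,Du+P) = \ov H^*(P)$. To identify $\ov H^*(P) = \ov H(P)$ it remains to show $\ov H^*(P) \leq \ov H(P)$. Fix $a > \ov H(P)$ and consider the distance subsolution $u(y) = \ov D_a(x_0,y)$, which is bounded (extending Theorem \ref{bounded} to level $a$) and a subsolution of $H(x,Du+P)=a$. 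If $u$ is Lipschitz with some constant $L$, then for $k > \max(L+|P|, -a)$ the penalty $f_k$ and the truncation $\max(H,-k)$ in $H_k$ are both inactive along $Du+P$, so $u$ is also a subsolution of $H_k(x,Du+P)=a$; hence $\ov H_k(P) \leq a$ and $\ov H^*(P) \leq a$. Letting $a \searrow \ov H(P)$ yields $\ov H^*(P) \leq \ov H(P)$, and the sub/supersolutions of the second bullet are then $v^*, v_*$ at the common level $\ov H(P) = \ov H^*(P)$.

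The main obstacle is the Lipschitz regularity of $u = \ov D_a(x_0,\cdot)$. Non-coercivity prevents bounded subsolutions from being automatically Lipschitz, since where $|V(x)| \geq 1$ the support function $\s(x,\cdot)$ is infinite in the direction of $-V(x)$ and $\ov D_a$ can accordingly admit unbounded one-sided derivatives. I would handle this by combining the local bound of Lemma \ref{corkeytris} (which controls $D_a(y,y')$ for $y'$ in the cone $\Int F_\delta(y)$ near $y$) with a sup-convolution regularization producing a Lipschitz modification that still satisfies $H(x,Du+P) \leq a + \eps$ for small $\eps > 0$. A direct comparison argument based on $v^*$ and $v_*$ is tempting but delicate under non-coercivity (the usual doubling of variables loses control when the test gradient $(x_\eps - y_\eps)/\eps$ blows up in the $-V$ direction), so the distance route appears preferable.
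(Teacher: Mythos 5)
Your scaffold is the same as the paper's: approximate by the coercive $H_k$, build equibounded Lipschitz periodic solutions $v_k$ of the critical equations (your equiboundedness via $\ov D_A^k \le \ov D_A \le C$ with $A \ge \ov H_1(P)$ is a valid, slightly leaner variant of the paper's Lemma \ref{effectiveness0}, resting on Lemma \ref{via}), and pass to $v^* = \limsups v_k$, $v_* = \liminfs v_k$, which are bounded periodic usc/lsc sub-/supersolutions of $H(x,Du+P) = \lim_k \ov H_k(P)$. The gap is in your route to $\lim_k \ov H_k(P) \le \ov H(P)$. Sup-convolving the bounded usc subsolution $u = \ov D_a(x_0,\cdot)$ does not produce a Lipschitz subsolution at a level $a+\eps$ with $\eps$ small: if $y$ is $u^\delta$-optimal for $x$ then $H(y,(x-y)/\delta+P) \le a$, and transferring this to $H(x,\cdot)$ costs $L_V\,|(x-y)/\delta + P|\,|x-y|$; the generic bound $|x-y|^2 \le 4\delta\|u\|_\infty$ makes this error of order $L_V\|u\|_\infty$, not $o(1)$ as $\delta \to 0$, precisely because $|Du^\delta| \sim \delta^{-1/2}$ blows up in the degenerate directions of $H$. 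Non-coercivity is the structural obstruction to Lipschitz regularization of subsolutions at nearby levels; Lemma \ref{corkeytris} gives only a local one-sided estimate in a shrinking cone and does not repair this.

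The comparison argument you set aside as ``delicate'' is in fact the paper's mechanism (Lemma \ref{ovh}), and non-coercivity does not defeat it, because the sup-convolution is not used in isolation. One sup-convolves a subsolution $u$ of a strictly lower level $b<a$ and then picks $x_\delta \in \arg\min (v - u^\delta)$, where $v$ is the lsc bounded periodic supersolution at level $a$. Minimality of $x_\delta$ upgrades the generic $|x_\delta - y_\delta|^2 = O(\delta)$ to $|x_\delta - y_\delta|^2/\delta \to 0$ (the standard Crandall--Ishii--Lions doubling asymptotics, valid for bounded lsc/usc functions), after which the transfer error $L_V\big(|x_\delta - y_\delta|/\delta + |P|\big)|x_\delta - y_\delta|$ does vanish; combined with the supersolution test at $x_\delta$ through $(x_\delta - y_\delta)/\delta \in D^- v(x_\delta)$, one gets $H(x_\delta,\cdot) \ge a$ and $<a$ simultaneously for small $\delta$, a contradiction. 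So the supersolution $v_*$, which you have already built, is the indispensable ingredient you are not using. Replace your Lipschitz-regularization step with an appeal to this uniqueness-of-critical-level lemma applied to the pair $(v^*, v_*)$ at level $\lim_k \ov H_k(P)$: this identifies $\lim_k \ov H_k(P) = \ov H(P)$ and simultaneously delivers the second bullet.
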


 \smallskip

Note that, in  absence of coerciveness of $H$,  the convergence
of  $\ov H_k$ to  $\ov H$  is not a straightforward consequence of the fact that $H_k \to H$ locally uniformly.

 One half of the previous
theorem is actually easy and is indeed a direct consequence of the properties of $H_k$.  In fact, if $u$ is a
subsolution to $H_k(x,Du + P)=a$, for some a, then it enjoys the
same property with respect to $H_j(x,Du + P)=a$ , whenever $j <
k$,  and with respect to \eqref{hjp}. This actually shows:

\begin{lemma}\label{ovhkh}    The sequence $\ov H_k(P)$
is nonincreasing
 and\begin{equation}\label{ovhkh1}
    \ov H_k(P) \geq \ov H(P) \txt{ for any $k$.}
\end{equation}
\end{lemma}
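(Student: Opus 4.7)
The proof plan rests on the pointwise comparison between the Hamiltonians, which was already noted: by monotone convergence, $H_k$ is a nonincreasing sequence of functions in $k$ and $H_k \geq H$ everywhere, so in fact
\[H(x,p) \leq H_k(x,p) \leq H_j(x,p) \qquad \text{for every $j \leq k$ and every $(x,p) \in \R^N \times \R^N$.}\]
The plan is to turn this pointwise inequality into an inclusion between the sets of admissible values appearing in the definition \eqref{defeffe} of the effective Hamiltonians, and then to take infima.

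The first step is to verify that if $u$ is a bounded usc periodic subsolution of $H_k(x,Du+P)=a$, then it is simultaneously a bounded usc periodic subsolution of $H_j(x,Du+P)=a$ for every $j \leq k$, as well as of $H(x,Du+P)=a$. This is a routine consequence of the comparison principle at the level of test functions in the viscosity sense: if $\var$ touches $u$ from above at $x_0$, then by the assumption on $u$ we have $H_k(x_0,D\var(x_0)+P) \leq a$, and since $H_j(x_0,\cdot) \leq H_k(x_0,\cdot)$ and $H(x_0,\cdot) \leq H_k(x_0,\cdot)$ pointwise, the same inequality holds with $H_j$ or $H$ in place of $H_k$. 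No regularity or coercivity is used here.

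The second step simply takes infima. Denote by
\[\A_k(P) = \{a \in \R \mid H_k(x,Du+P)=a \text{ admits a bounded usc periodic subsolution}\}\]
and analogously $\A(P)$. The previous step shows that $\A_k(P) \subset \A_j(P)$ for $j \leq k$ and $\A_k(P) \subset \A(P)$. Taking infima yields $\ov H_j(P) \leq \ov H_k(P)$ for $j \leq k$, which is the asserted monotonicity, together with $\ov H(P) \leq \ov H_k(P)$, which is \eqref{ovhkh1}. There is no genuine obstacle in this argument: the statement is essentially a formal consequence of the inequality $H \leq H_k$ and of the inf--definition of $\ov H$, $\ov H_k$, and the real work (namely, the reverse inequality $\lim_k \ov H_k(P) \leq \ov H(P)$, which requires the boundedness of $\ov D_1$ from Theorem \ref{bounded}) is deferred to the proof of Theorem \ref{effectiveness}.
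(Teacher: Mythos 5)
Your argument is correct and coincides with the paper's own reasoning: the pointwise inequalities $H \leq H_k \leq H_j$ for $j \leq k$ make every usc bounded periodic subsolution of $H_k(x,Du+P)=a$ a subsolution of the corresponding equations for $H_j$ and $H$, and taking infima in the definition \eqref{defeffe} gives both the monotonicity and \eqref{ovhkh1}. This is exactly the observation the paper records just before stating the lemma, so nothing further is needed.
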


\medskip

\begin{remark}\label{postovhkh1}
Since the matter is somehow intricate, let us summarize the
different monotonicity arising in the interplay between the
$H_k$ and $H$.
\begin{enumerate}
\item[--] $H_k(x,p)$ is  nonincreasing and $H_k(x,p) \to H(x,p)
$, for any $(x,p)$;
\item[--] $S^k_a(x,y)$ is nondecreasing
and $S^k_a(x,y) \leq  S_a(x,y)$, for any $x$, $y$, $a$;
\item[--] $\ov H_k(P)$ is  nonincreasing and the possible convergence
of $\ov H_k(P)$ to $\ov H(P)$ is the subject we are presently
investigating.
\end{enumerate}
\end{remark}

\bigskip

We need some  preliminary material for  the proof
of Theorem \ref{effectiveness}.
\smallskip

\begin{lemma}\label{via} Let $a \geq \ov H_1(P)$,  then
$\ov D_a$ is bounded in $\R^N \times \R^N$.
  \end{lemma}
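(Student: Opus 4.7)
The plan is to establish the upper and lower bounds on $\ov D_a$ separately. The upper bound follows by direct adaptation of the proof of Theorem \ref{bounded}; the hypothesis $a \geq \ov H_1(P)$ is used only in the lower bound, through the coercivity of the approximating Hamiltonian $H_1$.

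For the upper bound I would repeat the construction of Lemma \ref{corkeytris} at level $a$. Using the scaling $S_a = a\,S_1$ from Remark \ref{disto}, the very same neighborhoods $U_x$, $W_x$ yield $D_a(x, y) < a + |P|\,r_V(\d, 1)$ for $y \in U_x$ and the analogous bound for $y \in W_x$, and both families still cover $\R^N$. Running then the invariance argument of Theorem \ref{bounded} verbatim, for fixed $y_0$ the set $\O := \{ x \in \R^N : \ov D_a(y_0, x) < +\infty \}$ is nonempty, periodic, and invariant for $F$: for any $F$-integral curve $\xi$ issued from $x_0 \in \O$, Lemma \ref{co} together with the scaling gives $\s_a(\xi, \dot\xi) \leq a$, so
\[
\ov D_a(y_0, \xi(t)) \leq \ov D_a(y_0, x_0) + a\,t + P \cdot (x_0 - \xi(t)) < +\infty.
\]
Theorem \ref{divergence} then forces $\O = \R^N$, and the finite-covering argument at the end of the proof of Theorem \ref{bounded} promotes this pointwise finiteness to a uniform upper bound on $\ov D_a$.

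For the lower bound, the coercivity of $H_1$ ensures that the hypothesis $a \geq \ov H_1(P)$ produces a bounded periodic Lipschitz subsolution $u$ of $H_1(x, Du + P) = a$. Since $H \leq H_1$ pointwise, $u$ is also a subsolution of $H(x, Du + P) = a$, so the change of variables $v(x) := u(x) + P \cdot x$ makes $v$ a subsolution of $H(x, Dv) = a$. Its supergradients thus lie in $\{p : H(x,p) \leq a\}$, which via the support-function interpretation of $\s_a$ yields the standard intrinsic-distance inequality $v(y) - v(x) \leq S_a(x, y)$. Rearranging and exploiting the $\Z^N$-periodicity of $u$, I replace $y$ by $y + z$ and take the infimum over $z \in \Z^N$ to obtain $u(y) - u(x) \leq \ov D_a(x, y)$, whence $\ov D_a(x,y) \geq -2\,\|u\|_\infty$.

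The only real technical subtlety lies in the invariance step of the upper bound: I must verify that the displacement along an $F$-integral curve grows only linearly in $t$, so that the $P$-dependent term does not destroy the finiteness estimate. This is immediate since every $q \in F(x)$ satisfies $|q| \leq M_V + 1$, giving $|x_0 - \xi(t)| \leq (M_V + 1)\,t$. The hypothesis $a \geq \ov H_1(P)$ plays no role in the upper bound and enters the argument solely by supplying the subsolution underlying the lower bound.
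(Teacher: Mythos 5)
Your proposal is correct and follows essentially the same route as the paper: both split the claim into upper and lower bounds, obtain the lower bound from the Lipschitz periodic subsolution furnished by the coercivity of $H_1$ and the hypothesis $a \geq \ov H_1(P)$, and obtain the upper bound from the invariance/covering argument of Theorem~\ref{bounded}. The only small difference is that the paper deduces the upper bound directly from Theorem~\ref{bounded} via the scaling identity $S_a = a\,S_1$ (with $P$ replaced by $P/a$ when $a>0$, and by monotonicity when $a\le 0$) rather than re-running the covering and invariance argument at level $a$, but this is a matter of presentation and not of substance.
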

\begin{proof}  We start proving that it is bounded from above. If $a \leq 0$ then $\ov D_a \leq \ov D_1$, if $a >0$ then
\[\ov D_a(x,y)= a \, \inf_{z \in \Z^N} \{ S_1(x,y) + P/a \cdot (x-y)\}.\]
It is then enough to prove that $\ov D_1$ is bounded from above. This property actually comes  from Theorem \ref{bounded}. To see that $\ov D_a$ is also bounded from below, we recall that there is a  periodic solution $u$ of
\[H_1(x,Dv+P)= \ov H_1(P).\]
Then $u$ is a subsolution to
\[H(x,Dv+P)= a\]
and, since $u$ is Lipschitz continuous for the coerciveness of $H_1$, we have
\[ u(x) - u(y) \leq \ov D_a(y,x) \txt{for any $x$, $y$}\]
showing the claim.
\end{proof}

\smallskip

We derive:

\begin{lemma}\label{effectiveness0}  The  family of the distances  $ \big (\ov D_{\ov H_k(P)}^k \big )_k$ are equibounded in $\R^N \times \R^N$. \end{lemma}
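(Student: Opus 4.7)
The plan is to prove the upper and lower uniform bounds on $\ov D^k_{a_k}$ separately, writing $a_k := \ov H_k(P)$ throughout.

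For the upper bound, the idea is to chain two monotonicity comparisons so as to reduce to Lemma \ref{via} applied at the reference level $a_1 = \ov H_1(P)$. On the one hand, Lemma \ref{ovhkh} gives $a_k \leq a_1$, hence the sublevel $\{p: H_k(x,p) \leq a_k\}$ is contained in $\{p: H_k(x,p) \leq a_1\}$, and taking support functions and integrating along curves yields $S^k_{a_k} \leq S^k_{a_1}$. On the other hand, Remark \ref{postovhkh1} gives $H_k \geq H$, so $\{p: H_k(x,p) \leq a_1\} \subset \{p : H(x,p) \leq a_1\}$ and the same reasoning gives $S^k_{a_1} \leq S_{a_1}$. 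Since the linear correction $P \cdot (x - y - z)$ in \eqref{metro3} is independent of the Hamiltonian, I may add it to both sides after taking the infimum over $z \in \Z^N$, obtaining
\[ \ov D^k_{a_k}(x,y) \leq \ov D_{a_1}(x,y) \qquad \text{for all } x, y \in \R^N, \]
and Lemma \ref{via} applied with $a = a_1$ supplies a $k$--independent constant $C$ with $\ov D^k_{a_k} \leq C$.

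For the lower bound, I exploit the coerciveness of each $H_k$: it guarantees the existence of a Lipschitz periodic solution $u_k$ of the critical equation $H_k(x, Du + P) = a_k$, which I normalize by $u_k(0) = 0$. As in the final paragraph of the proof of Lemma \ref{via}, the subsolution property gives the variational comparison $u_k(x) - u_k(y) \leq \ov D^k_{a_k}(y,x)$ for every $x,y \in \R^N$. Feeding the upper bound into this inequality with $y = 0$ gives $u_k(x) \leq C$ and with $x = 0$ gives $-u_k(y) \leq C$, so $\|u_k\|_\infty \leq C$ uniformly in $k$. Re--injecting this into the variational comparison yields the matching lower bound $\ov D^k_{a_k}(x,y) \geq u_k(y) - u_k(x) \geq -2C$.

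The only genuinely delicate point is the interplay between the two monotonicities displayed in Remark \ref{postovhkh1}: $H_k$ is nonincreasing in $k$ while $a_k$ is also nonincreasing, so neither comparison by itself suffices to reach the reference level $a_1$ where Lemma \ref{via} is available, and the two must be combined in the right order. Once the upper bound is in place, the lower bound is a standard corrector argument, and the reason it behaves uniformly is precisely that the individual coerciveness of each $H_k$ makes the periodic solutions $u_k$ exist and equi--controlled by the upper bound just established.
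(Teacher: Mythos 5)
Your upper-bound argument coincides with the paper's: the chain $\ov D^k_{\ov H_k(P)}\le \ov D^k_{\ov H_1(P)}\le \ov D_{\ov H_1(P)}$, obtained by combining $\ov H_k(P)\le \ov H_1(P)$ (Lemma~\ref{ovhkh}) with $S^k_a\le S_a$ (Remark~\ref{postovhkh1}), and then Lemma~\ref{via}. Your lower-bound argument, however, takes a genuinely different and cleaner route. The paper manipulates the lattice infima directly, using the triangle inequality for $S^k_{\ov H_k(P)}$ together with the identity $\ov D^k_{\ov H_k(P)}(x,x)=0$ (equation~\eqref{eff1}), which is a weak-KAM fact at the critical level for the coercive $H_k$. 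You instead invoke the other side of the same weak-KAM input --- the existence of a Lipschitz periodic solution $u_k$ of $H_k(x,Du+P)=\ov H_k(P)$, which the paper records just before Proposition~\ref{kappone} --- and combine the subsolution-to-distance comparison $u_k(x)-u_k(y)\le \ov D^k_{\ov H_k(P)}(y,x)$ with the already-established uniform upper bound to control $\|u_k\|_\infty$, hence $\ov D^k_{\ov H_k(P)}\ge -2C$. Both proofs ultimately lean on the critical structure of $H_k$, but yours replaces the paper's lattice bookkeeping with a standard corrector comparison, which is shorter and easier to verify. A minor remark: the normalization $u_k(0)=0$ and the intermediate bound $\|u_k\|_\infty\le C$ are not strictly necessary --- from $u_k(y)-u_k(x)\ge -(u_k(x)-u_k(y))\ge -\ov D^k_{\ov H_k(P)}(y,x)\ge -C$ one gets $\ov D^k_{\ov H_k(P)}(x,y)\ge -C$ directly, a slightly sharper constant --- but this does not affect the validity of your argument.
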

\begin{proof}
We start proving the boundedness from above.   Taking into account Remark \ref{postovhkh1}, we have
\[\ov D_{\ov H_k(P)}^k(x,y) \leq \ov D^k_{\ov H_1(P)}(x,y)\leq \ov D_{\ov H_1(P)} (x,y). \]
To get the converse estimate, we first notice that
\begin{equation}\label{eff1}
  0= \ov D_{\bar H_k(P)}^k(x,x)= \inf_{z \in \Z^N} \{S^k_{\ov H_k(P)}(x,x+z)- P \cdot z\}
\end{equation}
for any $x \in \R^N$. We consider a pair of points $x$, $y$ in $\R^N$. We select $\bar z \in \Z^N$  in such a way that
\[  S_{\ov H_1(P)}(y, x+ \bar z) + P \cdot (y - x - \bar z) \leq \ov D_{\ov H_1(P)}(y,x) + 1.\]
We compute taking into account \eqref{eff1}, Remark \ref{postovhkh1} and Lemma \ref{via}
\begin{eqnarray*}
 \ov  D_{\ov H(P)}^k(x,y) &=& \inf_{z \in \Z^N} \{S^k_{\ov H_k(P)}(x+z,y))+ P \cdot (x+z -y)\} \\  &\geq& \inf_{z \in \Z^N} \{S^k_{\ov H_k(P)}(x +z ,x+ \bar z)- S^k_{\ov H_k(P)}(y, x + \bar z) \\ &+& P \cdot (  z- \bar z) - P \cdot (y - x - \bar z) \} \\ & \geq& \inf_{z \in \Z^N} \{S^k_{\ov H_k(P)}(x+  z,x+ \bar z)+ P \cdot ( z- \bar  z)\}\\ & +& P\cdot (x+ \bar z -y)  - S^k_{\ov H_k(P)}(y, x + \bar z) \\ & \geq&  P\cdot (x+ \bar z -y) - S_{\ov H_k(P)} (y,x+\bar z) \\ &\geq& P\cdot (x+ \bar z -y) - S_{\ov H_1(P)} (y,x+\bar z) \geq - \ov D_{\ov H_1(P)} (y,x) -1 .
\end{eqnarray*}
This concludes the proof.
\end{proof}

\medskip

\begin{lemma}\label{ovh} If for a given $a$  there is a bounded periodic usc subsolution and a bounded  periodic lsc supersolution of \eqref{hjp},   then $a = \ov H(P)$.
\end{lemma}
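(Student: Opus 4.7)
The inequality $\ov H(P) \leq a$ is immediate from the very definition of $\ov H(P)$ as an infimum, applied to the hypothesized bounded periodic usc subsolution at level $a$; only the reverse inequality requires work.

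For the reverse, I would argue by contradiction. Suppose $\ov H(P) < a$ and pick some $a' \in (\ov H(P), a)$; by the definition of $\ov H(P)$ there exists a bounded periodic usc subsolution $w$ of $H(x, Dw + P) = a'$. Together with the bounded periodic lsc supersolution $v$ of $H(x, Dv + P) = a$ supplied by the hypothesis, the strategy is the classical doubling variables comparison. Introduce
$$\Phi_\eps(x,y) = w(x) - v(y) - \frac{1}{2\eps}|x-y|^2,$$
which is upper semicontinuous, bounded above, invariant under the diagonal $\Z^N$--action $(x,y) \mapsto (x+z,y+z)$, and for which the penalty forces $|x-y|$ to stay small. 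A global maximizer $(x_\eps,y_\eps)$ with $x_\eps \in \ov Q_1$ therefore exists, and standard estimates yield $|x_\eps - y_\eps| \to 0$ together with $|x_\eps - y_\eps|^2 /\eps \to 0$ as $\eps \to 0$.

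Setting $p_\eps := (x_\eps - y_\eps)/\eps$ and using the quadratic penalty as a test function, the subsolution and supersolution viscosity conditions at $(x_\eps,y_\eps)$ deliver
$$H(x_\eps, p_\eps + P) \leq a' \qquad\hbox{and}\qquad H(y_\eps, p_\eps + P) \geq a.$$
Because $H(y,p) - H(x,p) = p\cdot (V(y) - V(x))$, subtracting and using the Lipschitz bound on $V$ yields
$$(|p_\eps| + |P|)\, L_V\, |x_\eps - y_\eps| \;\geq\; a - a' \;>\; 0.$$
The crux, and the only place where the lack of coercivity could be a concern, is that the left-hand side in fact tends to zero: one has $|p_\eps|\,|x_\eps - y_\eps| = |x_\eps - y_\eps|^2/\eps \to 0$ and $|P|\,|x_\eps - y_\eps| \to 0$. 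This produces the required contradiction. The specific growth of $H$ in $p$, linear with Lipschitz coefficient in $x$, is exactly matched by the quadratic control built into the doubling argument, which is why coercivity is not needed here.
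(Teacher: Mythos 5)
Your argument is correct and is essentially the same as the paper's, which runs the same comparison by contradiction but packages the variable-doubling through a sup-convolution of the subsolution rather than writing the penalized function $\Phi_\eps$ explicitly. Both hinge on the identical structural observation that $H(y,p)-H(x,p)=p\cdot(V(y)-V(x))$ makes the error term $L_V\,|p_\eps|\,|x_\eps-y_\eps|=L_V\,|x_\eps-y_\eps|^2/\eps$, which the quadratic penalty forces to zero, so coercivity is never needed.
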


The
proof is basically the same as in coercive case,  with some more
precaution. We present it in the Appendix.

\medskip

 We  need the following result, see for instance \cite{FS}.

\begin{proposition}\label{kappone}  For any $k$,  any $y$ in the Aubry set related to  $H_k$,   the function
$x \mapsto \ov D_{\ov H_k(P)}^k(y,x)$ is a periodic Lipschitz continuous solution of $H_k(x, P + Du)= \ov H_k(P)$.
\end{proposition}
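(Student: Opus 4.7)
The plan is to verify three claims about $u(x) := \ov D^k_{\ov H_k(P)}(y, x)$ --- periodicity, Lipschitz regularity, and the full solution property --- by placing ourselves inside the classical weak KAM framework for the coercive convex Hamiltonian $H_k$. Periodicity is immediate from the definition of $\ov D^k$ as an infimum over $\Z^N$ translates of its second argument. Lipschitz continuity will be read off from the coercivity of $H_k$: the critical sublevel $\{p \mid H_k(x,p) \leq \ov H_k(P)\}$ lies in a ball $\B(0,R)$ of radius independent of $x$, so $\s^k(x,q) \leq R|q|$ uniformly, and using a straight segment as competitor in the definition of $S^k_{\ov H_k(P)}$ produces a uniform Lipschitz bound for $u$, the linear term $P \cdot (y - \cdot)$ being absorbed into the constant.

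For the subsolution property I would argue at a single point $x_0$. The triangle-type inequality for $S^k$, together with periodicity of $H_k$ (hence of $S^k$) to reabsorb the integer shift in the infimum defining $\ov D^k$, yields the two-sided bracket
\[ u(x_0) - S^k_{\ov H_k(P)}(x_0 + tq, x_0) - tP\cdot q \;\leq\; u(x_0 + tq) \;\leq\; u(x_0) + S^k_{\ov H_k(P)}(x_0, x_0 + tq) - tP\cdot q \]
for every direction $q$ and small $t > 0$. If $\phi$ is a test function with $u - \phi$ attaining a local maximum at $x_0$, feeding the left-most inequality into the max condition $u(x_0+tq) - u(x_0) \leq \phi(x_0+tq) - \phi(x_0)$, dividing by $t$, and letting $t \to 0^+$ produces $(P + D\phi(x_0)) \cdot q' \leq \s^k(x_0, q')$ for every $q' \in \R^N$ (after relabeling $q' = -q$). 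Since $\s^k(x_0,\cdot)$ is by definition the support function of the critical sublevel of $H_k(x_0,\cdot)$, this is precisely the subsolution inequality $H_k(x_0, P + D\phi(x_0)) \leq \ov H_k(P)$.

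The supersolution property at any point $x_0 \not\equiv y \pmod{\Z^N}$ is the classical fact that distance functions of Finsler-type metrics solve the associated Hamilton--Jacobi equation away from the source point, and it can be deduced from the dynamic programming principle applied to near-optimal curves ending at $x_0$: any such curve can be extended a bit further past $x_0$, producing a lower bound on $u$ near $x_0$ that translates into the requisite supersolution inequality on test functions. The genuinely delicate case --- and the main obstacle --- is the supersolution inequality at the base point $y$ itself, which is where the Aubry-set hypothesis enters essentially. Because $H_k$ is coercive and convex in $p$, Fathi's weak KAM theory applies verbatim, and the Aubry set $\mathcal{A}_k$ admits the characterization as the set of those $y$ through which no sequence of cycles of vanishing diameter has strictly negative $\ov H_k(P)$-action; equivalently, $y \in \mathcal{A}_k$ iff $u$ satisfies the supersolution inequality at $y$. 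I would therefore invoke this characterization directly, as is done in \cite{FS}, rather than reconstruct from scratch the apparatus of critical subsolutions, Peierls barriers, and the asymptotic behavior of minimizing curves that underpins it.
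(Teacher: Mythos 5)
The paper does not prove Proposition \ref{kappone}; it states ``see for instance \cite{FS}'' and defers entirely to the weak KAM / PDE--Aubry--Mather theory for coercive convex Hamiltonians, so your sketch --- periodicity from the $\Z^N$-infimum, the uniform Lipschitz bound from coercivity of $H_k$, the subsolution inequality by pairing the triangle inequality for $\ov D^k$ with the support-function description of the critical sublevel, the supersolution property away from $y$ by extending near-optimal curves, and the supersolution property at $y$ via the Aubry-set hypothesis --- is in substance exactly the argument being cited. The one loose spot is your verbal characterization of the Aubry set (``no sequence of cycles of vanishing diameter has strictly negative $\ov H_k(P)$-action''): at the critical level the intrinsic action of every cycle is already nonnegative, and the correct statement, as in \cite{FS}, is that $y\in\mathcal A_k$ iff cycles through $y$ of Euclidean length bounded away from zero can have intrinsic length arbitrarily close to zero; but since you explicitly invoke \cite{FS} for this equivalence rather than reprove it, the slip is cosmetic and does not affect the argument.
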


\medskip

\noindent \begin{proof}[ Proof of Theorem  \ref{effectiveness}]  We know
from Proposition \ref{kappone},  that we can select, for any $k$,
$y_k$ in such a way that $v_k := \ov D_{\ov H_k(P)}^k(y_k,\cdot)$ is solution to
\[ H_k(x, Du+ P)= \ov H_k(P).\]
 Since
the $v_k$ are periodic, equibounded  by Lemma \ref{effectiveness0}, the
functions
\[v := \liminfs v_k   \qquad \hbox{and} \qquad  w:= \limsups v_k\]
are  bounded periodic and, taking into account the local uniform convergence of $H_k$ to $H$ plus  standard
stability properties of viscosity solutions theory, are indeed
lsc supersolution and usc subsolution, respectively, to
\[ H(x,Du + P)= \lim_k \ov H_k(P).\]
This implies by  Lemma \ref{ovh} that $\ov H(P) = \lim_k \ov H_k(P)$. The
assertion  then follows.

\end{proof}

\smallskip

Exploiting Theorem \ref{effectiveness}, we get the homogenization result in a quite standard way. We state it below and provide the proof in the Appendix.

\smallskip

\begin{theorem} \label{homohomo} The solutions to \eqref{HJe} locally uniformly converge to the solution of \eqref{HJ}, where $\ov H$ is the effective Hamiltonian defined in \eqref{defeffe}.

\end{theorem}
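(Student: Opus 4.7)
The plan is to apply the perturbed test function method of Evans \cite{E} with the periodic sub- and supercorrectors furnished by Theorem \ref{effectiveness}, inserting a sup/inf--convolution regularization to compensate for the fact that these correctors are only semicontinuous.

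\smallskip

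\noindent\textbf{Compactness.} First I would establish a uniform $L^\infty$ bound and a uniform modulus of continuity in $t$ for the family $\{u^\eps\}$ on any strip $\R^N \x [0,T]$, using standard viscosity comparison with explicit barriers built from $u_0$, the sublinear growth of $H$ in $p$, and the identity $H(x,0)\equiv 0$. This ensures that the semilimits $\und u := \liminfs u^\eps$ and $\ov u := \limsups u^\eps$ are bounded and satisfy $\und u(\cdot,0)=\ov u(\cdot,0)=u_0$.

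\smallskip

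\noindent\textbf{Subsolution property of $\ov u$.} Given a smooth $\phi$ touching $\ov u$ from above at $(x_0,t_0)$ with $t_0>0$, set $P=D\phi(x_0,t_0)$, $q=\phi_t(x_0,t_0)$, and assume for contradiction $q+\ov H(P)\geq 3\eta>0$. By the very definition \eqref{defeffe} of $\ov H$ there is a bounded periodic usc subsolution $w$ of $H(y,Du+P)=\ov H(P)+\eta$. Its sup--convolution $w^\rho$ is continuous and semiconcave, and by a routine argument exploiting uniform continuity of $H$ on bounded sets together with the fact that the sup is realized at a point at distance $O(\sqrt\rho)$, $w^\rho$ remains a viscosity subsolution of $H(y,Du+P)=\ov H(P)+\eta+\omega(\rho)$ with $\omega(\rho)\to 0$. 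The perturbed test function $\phi^\eps(x,t):=\phi(x,t)+\eps\,w^\rho(x/\eps)$ admits local maxima $(x_\eps,t_\eps)\to(x_0,t_0)$ of $u^\eps-\phi^\eps$; combining the subsolution inequality for $u^\eps$ at $(x_\eps,t_\eps)$ with that for $w^\rho$ at $x_\eps/\eps$, and letting first $\eps\to 0$ and then $\rho\to 0$, I would derive $q+\ov H(P)\leq \eta$, contradicting the assumption. The proof that $\und u$ is a supersolution is symmetric, using the bounded lsc periodic supersolution of $H(y,Du+P)=\ov H(P)$ produced by Theorem \ref{effectiveness} and its inf--convolution.

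\smallskip

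\noindent\textbf{Comparison and conclusion.} Since $\ov H$ is convex and positively homogeneous on $\R^N$, it is globally Lipschitz, so the standard comparison principle for the Cauchy problem of first--order Hamilton--Jacobi equations with continuous Hamiltonian applies to bounded usc subsolutions versus bounded lsc supersolutions of \eqref{HJ} sharing the initial datum $u_0\in BUC(\R^N)$. This gives $\ov u\leq\und u$ on $\R^N\x[0,+\infty)$; the reverse inequality is automatic, so $u^0:=\ov u=\und u$ is the unique solution of \eqref{HJ} and $u^\eps\to u^0$ locally uniformly.

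\smallskip

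\noindent The main obstacle is precisely the semicontinuity of the correctors: in the standard coercive setting the correctors are Lipschitz and the perturbed test function computation is pointwise, while here the sup/inf--convolution step must be carried out with care to preserve the approximate sub/supersolution property with a modulus vanishing as $\rho\to 0$. This is the unavoidable price of having only semicontinuous objects coming out of Theorem \ref{effectiveness}, but once the regularization lemma is in place the remainder of the proof is a standard adaptation of Evans' scheme.
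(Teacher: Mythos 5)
Your overall plan---Evans's perturbed test function method, using the semicontinuous correctors from Theorem \ref{effectiveness} and regularizing them by sup/inf--convolution---is the same as the paper's, but you have swapped the roles of the sub- and supersolution correctors, and this is a fatal error, not a typo. To prove the \emph{sub}solution property of $\ov u=\limsups u_\eps$ one needs a \emph{super}solution of the cell problem $H(y,Du+P)=\ov H(P)$. Indeed, at a local max $(x_\eps,t_\eps)$ of $u_\eps-\phi-\eps\chi(\cdot/\eps)$, the viscosity inequality for the subsolution $u_\eps$ gives $\phi_t(x_\eps,t_\eps)+H(x_\eps/\eps,D\phi(x_\eps,t_\eps)+p)\leq 0$, i.e.\ an \emph{upper} bound on $H$; the subsolution property of your $w^\rho$ provides a second \emph{upper} bound $H(\cdot,P+p)\leq \ov H(P)+\eta+\omega(\rho)$, and two upper bounds on the same quantity combine to nothing. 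The desired conclusion $\phi_t(x_0,t_0)+\ov H(P)\leq 0$ requires instead the \emph{lower} bound $H(\cdot,P+p)\geq \ov H(P)$ modulo errors, which is exactly what the lsc supersolution $w$ from Theorem \ref{effectiveness}, together with its inf--convolution $w_\delta$, supplies in the paper's proof.

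There is a second, purely technical obstruction to your choice: the sup--convolution $w^\rho$ of a usc function is \emph{semiconvex}, and the superdifferential $D^+w^\rho(x_\eps/\eps)$---the set one must insert into $D^+u_\eps(x_\eps,t_\eps)$, since at the max $u_\eps$ is touched from above by $\phi+\eps w^\rho(\cdot/\eps)+C$---can be empty at kink points of a semiconvex function. The paper's inf--convolution $w_\delta$ is \emph{semiconcave}, so $D^+w_\delta=\partial w_\delta$ is nonempty everywhere, and the test can always be carried out. The same reversal appears in your sketch of the other half: to show $\und u=\liminfs u_\eps$ is a supersolution one should use a periodic usc \emph{sub}solution of $H(x,Du+P)=\ov H(P)+\delta$ (these exist for every $\delta>0$ by the very definition \eqref{defeffe}, with no need of Theorem \ref{effectiveness}), regularized by sup--convolution, not the lsc supersolution you invoke. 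Once the roles are corrected, the rest of your outline (the compactness/barrier step, comparison via the Lipschitz continuity of $\ov H$, and the careful coupling of the regularization parameter with $\eps$ to kill the error terms) is sound and matches the paper.
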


\appendix

\section{Some Proofs}

\noindent\begin{proof}[Proof of Proposition \ref{sigmalsc}] Let us consider $(x_0,q_0)$ and an approximating  sequence
$(x_n,q_n)$. First assume $\s(x_0,q_0) < + \infty$
and pick $p_0 \in Z(x_0)$ with $ \s(x_0,q_0)  \leq p_0 \cdot q_0 +
\d$, for some positive $\d$. Taking into account that the interior of $Z(x_n)$ is nonempty,  there is a
sequence $p_n \in Z(x_n)$  converging to $p_0$. We find
\[ \liminf_n \s(x_n,q_n) \geq \lim_n p_n \cdot q_n= p_0 \cdot q_0
\geq \s(x_0,q_0) - \d,\] and the claimed semicontinuity follows
since $\d$ has been arbitrarily chosen .

If $\s(x_0,q_0) =  + \infty$, then taking  any $\b  >0$,  we find
$p_0 \in Z(x_0)$ with $p_0 \cdot q_0 \geq \b$, then
\[ \liminf_n \s(x_n,q_n) \geq \lim_n p_0 \cdot q_n= p_0 \cdot q_0
\geq \b,\] which  implies $\lim_n \s(x_n,q_n)= + \infty$.

The claimed positive homogeneity and subadditivity are immediate.

\end{proof}

\medskip
\noindent \begin{proof}[Proof of Lemma \ref{ovh}]  The argument is by contradiction.  Let  $a$ be such that \eqref{hjp} has
  supersolution and subsolution as specified in the statement, denoted by  $v$, $u$, respectively.  If $ a > \ov H(P)$, then  we can select $b < a$ with
 \[H(x,Du + P) \leq b \txt{ in
the viscosity sense.}\] We denote, for any $\d
>0$, by $u^\d$ the  sup--convolutions  of $u$. By basic properties of it, we know that
\[H \left (y, \frac{x - y}\d  \right )\leq b \]
for any $\d$, any $x \in \R^N$, $y$ $u^\d$--optimal to $x$.
We proceed considering a sequence
\begin{equation}\label{ovh0}
   x_\d \in \arg\min_{\R^N} \{v - u^\d\},
\end{equation}
note that such minimizers do exist because $v$ is lsc, $u^\d$ usc, and both $v$  and $u^\d$ are periodic.  We denote,
for any $\d$, by $y_\d$ an element $u^\d$--optimal for $x_\d$, we have
 \begin{equation}\label{ovh1}
   \lim_\d   \frac{|x_\d - y_\d|^2}\d =0.
\end{equation}
Consequently, taking into account the inequality
\[\left |H \left (x_\d,\frac{x_\d - y_\d}\d \right )- H \left (y_\d,\frac{x_\d - y_\d}\d \right ) \right | \leq L_V \, \frac{|x_\d-y_\d|^2}\d\]
and that $b <a$, we conclude that
\begin{equation}\label{ovh2}
    H \left (x_\d, \frac{x_\d - y_\d}\d  \right ) < a,
\end{equation}
for $\d$ small enough. On the other side,  keeping in mind that
$\frac{x_\d - y_\d}\d \in D^-u^\d(x_\d) $ and that  $u^\d$ is
subtangent to $v$ at $x_\d$, we get
\[ H \left (x_\d, \frac{x_\d - y_\d}\d  \right )\geq a \quad\txt{for any $\d$,} \]
which contradicts   \eqref{ovh2}.

\end{proof}

\medskip

\noindent\begin{proof}[Proof of Theorem \ref{homohomo}]  Let $u_\eps$ the solutions to \eqref{HJe}, they are equibounded by {\bf (A3)}. We set
\[v = \liminfs u_\eps   \qquad \hbox{and} \qquad  u= \limsups u_\eps.\]
As already pointed out in the Introduction, it is standard to show that $v$ is a supersolution of \eqref{HJ}.

We focus on proving  that $u$ is subsolution to \eqref{HJ}.
 We consider $(x_0,t_0) \in \R^N \times (0,+ \infty)$ and  a $C^1$ strict  supertangent $\psi$ to $u$ at $(x_0,t_0)$. We denote by
   $w$   a bounded periodic lsc supersolution to the  problem
\[H(x,Dv + D(\psi(x_0,t_0)) )= \ov H(D(\psi(x_0,t_0))).\]
We have that  $\eps \, w(x /\eps)$  is supersolution to
\begin{equation}\label{homo01}
 H(x/\eps,Dv + D(\psi(x_0,t_0)))= \ov H(D(\psi(x_0,t_0))).
 \end{equation}
  The point  $(x_0,t_0)$
is the unique maximizer of $u - \psi$ in $\ov U$, for a suitable
choice of an open  neighborhood $U$ of $(x_0,t_0)$ in $\R^N \times
(0,+ \infty)$. We proceed considering the functions
\begin{equation}\label{homo1}
 u_\eps(x,t) - \psi(x,t) - \eps \, w_{\delta}(x/\eps),
\end{equation}
where  $w_{\delta}$ stands for the $\delta$ inf--convolution of $w$ for $\delta >0$.
 We have that
 the quantity $\eps \, w_\delta(x/\eps)$ converges uniformly to
 $0$ as $\eps \to 0$, uniformly with respect to $\delta$. Owing to the stability properties of
 maximizers, we deduce
 that  a sequence of maximizers  of \eqref{homo1} denoted by $(x^\delta_\eps,t^\delta_\eps)$, belongs to
 $U$ for $\eps$ small enough and
 \begin{equation}\label{homo10}
   (x^\delta_\eps,t^\delta_\eps) \to (x_0,t_0) \qquad\hbox{as $\eps \to 0$, uniformly in $\d$.}
\end{equation}
Since
 \[ (\psi_t(x^\delta_\eps,t^\delta_\eps), D\psi(x^\delta_\eps,t^\delta_\eps) +  p^\delta_\eps) \in D^+
 u_\eps(x^\delta_\eps,t^\delta_\eps)\]
for any $p^\delta_\eps \in \partial w_{\delta}(x^\delta_\eps) =
 D^+w_\delta(x^\delta_\eps)$,  and $u_\eps$ is subsolution to \eqref{HJe}, we
 derive
 \begin{equation}\label{homo2}
  \psi_t(x^\delta_\eps,t^\delta_\eps)+ H(x^\delta_\eps/\eps, D\psi(x^\delta_\eps,t^\delta_\eps)+p^\delta_\eps) \leq
 0  \quad\hbox{for any $p^\delta_\eps \in \partial  w_\delta(x^\delta_\eps)$.}
\end{equation}
 Since $w_\delta$  is inf--convolution of $w$, and $w$ is supersolution to the equation \eqref{homo01}, we further  have
 \[H(y^\delta/\eps, Dw_\delta(x/\eps) + D\psi(x_0,t_0)) \geq \ov
 H(D\psi(x_0,t_0))\]
 at any  $x$ where $w_\delta$ is differentiable, with $y^\delta$
 denoting the $w_\delta
$--optimal point for $x$. Consequently
 \begin{equation}\label{homo3}
 H(y^\delta_\eps/\eps, q^\delta_\eps + D\psi(x_0,t_0)) \geq \ov H(D\psi(x_0,t_0))
\end{equation}
for any $\eps$, some $q^\delta_\eps \in \partial w_\delta(x^\delta_\eps)$, with $y^\delta_\eps$ denoting an  $u_\delta$--optimal point for $x^\delta_\eps$.  In order to combine  \eqref{homo2} and  \eqref{homo3},  we need to estimate
\begin{equation}\label{homo5}
   |H(x^\delta_\eps/\eps, D\psi(x^\delta_\eps,t_\eps) +  q^\delta_\eps) - H(y^\delta_\eps/\eps, D\psi(x_0,t_0) + q^\delta_\eps)|=:I.
\end{equation}
After straightforward computations, we get
\begin{eqnarray*}
 I &\leq& |D\psi(x_0,t_0) - D\psi(x^\delta_\eps, t^\delta_\eps)| \, (1 + |V(x^\delta_\eps/\eps)|) \\
   &+&  |D\psi(x_0,t_0) + q_\eps^\delta| \, L_V \,  \frac {|x^\delta_\eps - y^\delta_\eps|}\eps \\
   &\leq&\OO(\eps) + \frac{\oo(\sqrt\d)}\eps + \frac{\oo(\d)}\d \, \frac 1\eps.
\end{eqnarray*}
We freeze $\eps$ and move $\d$ in order to obtain a $\d_\eps$ for which
\[I \leq \OO(\eps) + \frac{\eps^2}\eps +  \frac {\eps^2}\eps  =\OO(\eps).\]
We use the above estimate plus \eqref{homo2},  \eqref{homo3} to finally get
\begin{eqnarray*}
  0 &\geq& \psi_t(x^{\delta_\eps}_\eps/,t^{\delta_\eps}_\eps)+ H(x^{\delta_\eps}_\eps/\eps, D\psi(x^{\delta_\eps}_\eps,t^{\delta_\eps}_\eps)+q^{\delta_\eps}_\eps) \\
  &\geq& \psi_t(x^{\delta_\eps}_\eps,t^{\delta_\eps}_\eps) +
  H(y^{\d_\eps}_\eps/\eps, D\psi(x_0,t_0)+q^{\delta_\eps}_\eps) - \OO(\eps)\\
  &\geq& \psi_t(x^{\delta_\eps}_\eps,t^{\delta_\eps}_\eps) + \ov H(D\psi(x_0,t_0)) - \OO(\eps).
\end{eqnarray*}
Sending $\eps \to 0$, we see that $u$ satisfies the subsolution test at $(x_0,t_0)$ with respect to the supertangent $\psi$, as was claimed.

Finally, taking into account that \eqref{HJ} satisfies a comparison principle, we get that the $u_\eps$ locally uniformly converge to $v=u$.

\end{proof}

\end{document}